\newtheorem{theorem}{Theorem}[section]
\newtheorem{lemma}[theorem]{Lemma}
\newtheorem{definition}[theorem]{Definition}
\newtheorem{proposition}[theorem]{Proposition}
\newtheorem{corollary}[theorem]{Corollary}
\newtheorem{remark}[theorem]{Remark}
\newtheorem{example}[theorem]{Example}
\newtheorem*{theorem*}{\it Theorem}
\def\R{\mathbb R}
\def\N{\mathbb N}
\numberwithin{equation}{section}
\def\1{\raisebox{2pt}{\rm{$\chi$}}}
\def\XXint#1#2#3{{\setbox0=\hbox{$#1{#2#3}{\int}$}
		\vcenter{\hbox{$#2#3$}}\kern-.5\wd0}}
\def\XXint#1#2#3{{\setbox0=\hbox{$#1{#2#3}{\int}$}
		\vcenter{\hbox{$#2#3$}}\kern-.5\wd0}}
\newcommand{\twopartdef}[4]
{
	\left\{
	\begin{array}{ll}
		#1 & #2 \\
		#3 & #4
	\end{array}
	\right.
}
\DeclareMathOperator*{\esssup}{ess\,sup}
\begin{document}
	
\title[Weak solutions to metric gradient flows]{\bf Weak solutions to gradient flows of functionals \\ with inhomogeneous growth in metric spaces}
	
\author[W. G\'{o}rny]{Wojciech G\'{o}rny}

\address{ W. G\'{o}rny: Faculty of Mathematics, Universit\"at Wien, Oskar-Morgerstern-Platz 1, 1090 Vienna, Austria; Faculty of Mathematics, Informatics and Mechanics, University of Warsaw, Banacha 2, 02-097 Warsaw, Poland.
\hfill\break\indent
{\tt  wojciech.gorny@univie.ac.at }
}
	
%
%
	
\keywords{Metric measure spaces, Nonsmooth analysis, Inhomogeneous growth. \\
\indent 2020 {\it Mathematics Subject Classification:} 49J52, 58J35, 35K90, 35K92.}
	
\setcounter{tocdepth}{1}

\date{\today}
	
\begin{abstract} 
We use the framework of the first-order differential structure in metric measure spaces introduced by Gigli to define a notion of weak solutions to gradient flows of convex, lower semicontinuous and coercive functionals. We prove their existence and uniqueness and show that they are also variational solutions; in particular, this is an existence result for variational solutions. Then, we apply this technique in the case of a gradient flow of a functional with inhomogeneous growth.
\end{abstract}	
\maketitle


\section{Introduction}

The study of gradient flows of convex functionals originated in the late 60s with the works of Komura \cite{Komura}, Crandall-Pazy \cite{CP} and Brezis \cite{Brezis0}. This approach was based on the theory of maximal monotone operators in Hilbert spaces; the subdifferential of a convex functional in a Hilbert space is maximal monotone and therefore generates a contraction semigroup, which can be understood as a solution of the corresponding evolution equation. Furthermore, the solutions constructed using this approach have regularising properties (see \cite{Brezis} for an overview). The theory of gradient flows has since expanded in many directions, among which we want to highlight the approach originated by Otto \cite{Otto}, where many partial differential equations can be understood as gradient flows by resorting to probability spaces endowed with the Wasserstein metric. This in turn led to the study of gradient flows in metric spaces. A classical reference is the monograph by Ambrosio, Gigli and Savar\'e \cite{AGSBook}. The starting point of this paper are the papers \cite{AGS1,AGS2} of the same authors concerning the semigroup approach to the heat flow, the $p$-Laplacian evolution equation, and their relation to the study of metric spaces with Ricci curvature bounded from below (in the sense of the $CD(K,N)$ condition given by Sturm \cite{Sturm1,Sturm2}).

Using the semigroup approach, the heat flow has been studied by Ambrosio, Gigli and Savar\'{e} in \cite{AGS2}. Assuming the $CD(K,\infty)$ condition on the metric measure space $(\mathbb{X},d,\nu)$, the authors define the heat flow as the gradient flow in $L^2(\mathbb{X},\nu)$ of the Dirichlet-Cheeger energy
\begin{equation*}
\mathsf{Ch_2}(u) = \twopartdef{\displaystyle \frac12 \int_{\mathbb{X}} |Du|^2 \, d\nu}{\mbox{if } u \in W^{1,2}(\mathbb{X},d,\nu);}{\displaystyle +\infty}{\mbox{otherwise}}
\end{equation*}
(see Section \ref{sec:preliminaries} for the exact definitions of the objects involved) and study its properties. This gives some additional information concerning the structure of the metric measure space. If the functional $\mathsf{Ch}_2$ defines a quadratic form, we say that $\mathbb{X}$ is an $RCD(K,\infty)$ space, i.e., it has Riemannian Ricci curvature bounded from below. This condition has been since utilised to study for instance the tangent spaces to a metric measure space and to prove independence of the upper gradients on the exponent. Alternatively, the gradient flow of $\mathsf{Ch}_2$ can be understood as the gradient flow of the Boltzmann entropy with respect to the Wasserstein distance in the space of probabilities; as shown by Kell in \cite{Kell}, a similar analysis applies to the $p$-Laplacian evolution equation.

In these papers, the gradient flow in $L^2(\mathbb{X},\nu)$ of the $p$-Cheeger energy
\begin{equation*}
\mathsf{Ch_p}(u) = \twopartdef{\displaystyle \frac1p \int_{\mathbb{X}} |Du|^p \, d\nu}{\mbox{if } u \in W^{1,p}(\mathbb{X},d,\nu);}{\displaystyle +\infty}{\mbox{otherwise}}
\end{equation*}
was defined in the framework of maximal monotone operator in Hilbert spaces and the corresponding $p$-Laplacian operator is defined through the subdifferential of the $p$-Cheeger energy, but without giving a characterisation of it. A characterisation of the subdifferential of the $p$-Cheeger energy in terms of a first-order differential structure due to Gigli \cite{Gig} was given by G\'orny and Maz\'on in \cite{GM2021}. In this way, the authors introduce a notion of weak solutions to the $p$-Laplacian evolution equation, where the gradient of the solution is understood in the sense of \cite{Gig}. This approach yields a pointwise condition for verifying if a function is a solution of a gradient flow in quite general settings, such as weighted Euclidean spaces and Finsler manifolds, as well as give a new technique to study the qualitative properties of solutions. Using a general Gauss-Green formula on metric measure space introduced in \cite{GM2021-2}, this approach has been applied by the same authors to study the total variation flow \cite{GM2021-3}. In particular, the authors introduced a notion of entropy solutions to the total variation flow for initial data in $L^1(\mathbb{X},\nu)$.

The main effort in this paper is to study the gradient flows which arise from general convex functionals in metric measure spaces. Throughout the whole paper, we only consider functionals of superlinear growth and assume that $p > 1$. We first introduce a notion of weak solutions for convex and lower semicontinuous functionals $\mathcal{F}: L^2(\mathbb{X},\nu) \rightarrow [0,+\infty]$ which depend only on the differential $du$ of the function $u$ in the sense of \cite{Gig}, i.e.,
\begin{equation}
\mathcal{F}(u) = \twopartdef{E(du)}{\mbox{if } u \in W^{1,p}(\mathbb{X},d,\nu);}{+ \infty}{\mbox{otherwise},}
\end{equation}
where $E$ is continuous, convex, and coercive. The whole setting is described in detail in Section \ref{sec:preliminaries}. In particular, this setting covers the case when the functional only depends on $u$ through its minimal $p$-weak upper gradient. We then prove existence and uniqueness of weak solutions and compare them to variational solutions in the sense of Lichnewsky and Temam \cite{LT}; this also serves as an existence result for variational solutions. Let us note that these results do not require doubling, Poincar\'e, or curvature assumptions on the metric measure space $(\mathbb{X},d,\nu)$. Then, assuming that the space has Riemannian Ricci curvature bounded from below so that the minimal upper gradients do not depend on the exponent, we apply these general results to give a characterisation of the subdifferential of an operator with inhomogeneous growth: the $(q,p)$-Laplace operator, where $1 \leq q < p$, and exploit it to conclude that this operator is completely accretive.

Let us now briefly describe the structure of the paper. We will work under the standard assumptions that the metric space $(\mathbb{X},d)$ is complete and separable. Furthermore, we will require that $\nu$ is a nonnegative Radon measure which is finite on bounded sets. In Section \ref{sec:preliminaries}, we recall all the notions about analysis on metric measure spaces required in this paper; more precisely, in Section \ref{subsec:sobolev}, we recall the definition of the Sobolev space $W^{1,p}(\mathbb{X},d,\nu)$ in the metric setting, in Section \ref{subsec:diffstructure} we recall the construction of the first-order differential structure on metric measure spaces introduced by Gigli \cite{Gig}, and in Section \ref{subsec:gradientflows} we recall the notions of subdifferential in convex analysis and solutions to abstract Cauchy problems in Hilbert spaces. Then, in Section \ref{sec:general} we introduce a notion of weak solutions to gradient flows of general convex, lower semicontinuous and coercive functionals and prove their existence and uniqueness, and in Section \ref{sec:inhomogeneous} we apply this general theory for the specific case of the $(q,p)$-Laplace operator.

\section{Preliminaries}\label{sec:preliminaries}

\subsection{Sobolev functions in metric measure spaces}\label{subsec:sobolev}
	
Let $(\mathbb{X}, d, \nu)$ be a metric measure space. For any $p \in [1,\infty)$, in the literature there are several possible definitions of Sobolev spaces on $\mathbb{X}$, most prominently via $p$-upper gradients, $p$-relaxed slopes, and via test plans. On complete and separable metric spaces equipped with a Borel measure which is finite on bounded sets, all these definitions agree (see \cite{AGS1} for the case $p > 1$ and \cite{ADiM} for the case $p = 1$). Throughout the whole paper, we assume that these conditions hold. We choose to work with Newtonian spaces, following the presentation in \cite{BB}, but we stress that our approach works just as well with a different choice of the definition of Sobolev spaces.

We say that a Borel function $g$ is an {\it upper gradient} of a Borel function $u: \mathbb{X} \rightarrow \R$ if for all curves $\gamma: [0,l_\gamma] \rightarrow \mathbb{X}$ we have
$$ \left\vert u(\gamma(l_\gamma)) - u(\gamma(0)) \right\vert \leq \int_\gamma g \,  ds := \int_0^{l_\gamma} g(\gamma(t))  \, \vert \dot{\gamma}(t) \vert \, dt \,  ds,$$
where $$\vert \dot{\gamma}(t) \vert:= \lim_{\tau \to 0} \frac{\gamma(t + \tau) - \gamma(t)}{\tau}$$
is the {\it metric speed} of $\gamma$.

If this inequality holds for $p$-almost every curve, i.e. the {\it $p$-modulus}
\begin{equation}
\mathrm{Mod}_p(\Gamma) = \inf_{\rho \geq 0; \,\, \int_\gamma \rho \, ds \geq 1 \, \forall \, \gamma \in \Gamma} \int_{\mathbb{X}} \rho^p \, d\nu
\end{equation}
of the family of all curves for which it fails equals zero, then we say that $g$ is a {\it $p$-weak upper gradient} of $u$.

The Sobolev-Dirichlet class $D^{1,p}(\mathbb{X})$ consists of all Borel functions $u: \mathbb{X} \rightarrow \R$ for which there exists  an upper gradient (equivalently: a $p$-weak upper gradient) which lies in $L^p(\mathbb{X},\nu)$. The Sobolev space $W^{1,p}(\mathbb{X}, d, \nu)$ is defined as
$$W^{1,p}(\mathbb{X}, d, \nu):= D^{1,p}(\mathbb{X}) \cap L^p(\mathbb{X}, \nu).$$
In the literature, this space is sometimes called the Newton-Sobolev space (or Newtonian space) and is denoted $N^{1,p}(\mathbb{X})$. The space $W^{1,p}(\mathbb{X},d,\nu)$ is endowed with the norm
\begin{equation*}
\| u \|_{W^{1,p}(\mathbb{X},d,\nu)} = \bigg( \int_{\mathbb{X}} |u|^p \, d\nu + \inf_g \int_{\mathbb{X}} g^p \, d\nu \bigg)^{1/p},
\end{equation*}
where the infimum is taken over all upper gradients of $u$. Equivalently, we may take the minimum over the set of all $p$-weak upper gradients. If the measure $\nu$ is doubling and a weak $(1,p)$-Poincar\'e inequality is satisfied (see e.g. \cite{BB} for the definitions), Lipschitz functions are dense in $W^{1,p}(\mathbb{X},d,\nu)$.

For every $u \in D^{1,p}(\mathbb{X})$, there exists a minimal $p$-weak upper gradient $|Du| \in L^p(\mathbb{X},\nu)$, i.e. we have $|Du| \leq g$ $\nu$-a.e. for all $p$-weak upper gradients $g \in L^p(\mathbb{X},\nu)$. It is unique up to a set of measure zero. In particular, we may simply plug in $|Du|$ in the infimum in the definition of the norm in $W^{1,p}(\mathbb{X},d,\nu)$. Moreover, in \cite{AGS1} (see also \cite{DiMarinoTh}) it was proved that on complete and separable metric spaces equipped with  a nonnegative Borel measure finite on bounded sets not only the various definitions of Sobolev spaces are equivalent, but also that various definitions of $|Du|$ are equivalent, including the Cheeger gradient or the minimal $p$-relaxed slope of $u$. Clearly, this identification holds up to sets of $\nu$-measure zero, since elements of the Newton-Sobolev space are defined everywhere and in the other definitions the Sobolev functions are defined $\nu$-a.e. 

In general, the minimal $p$-weak upper gradient may depend on $p$, see for instance \cite{dMS}. It was proved in \cite{GH} that if $\mathbb{X}$ has Riemannian Ricci curvature bounded from below (see \cite{AGS2}), then the minimal $p$-weak upper gradients of a given function coincide for all $p \in (1,\infty)$, and if additionally $\mathbb{X}$ is a proper space (i.e., closed and bounded sets are compact), then the above claim is valid for all $p \in [1,\infty)$. An extended discussion on this topic can be found in \cite{AdMG}.

\subsection{The differential structure}\label{subsec:diffstructure}

In the next Sections, we introduce a notion of weak solutions to a class of equations with possibly inhomogeneous growth and study their properties. Our main tool will be the linear differential structure on a metric measure space $(\mathbb{X},d,\nu)$ introduced by Gigli. We follow Gigli \cite{Gig} and Buffa-Comi-Miranda \cite{BCM} in the introduction of this first-order differential structure.

First, we will recall some basic ideas concerning the theory of $L^p$-normed modules developed in \cite{Gig}. We start with $L^\infty$-modules and their morphisms.

\begin{definition}
A Banach space $M$ is called an {\it $L^\infty$-premodule} (over $L^\infty(\mathbb{X},\nu)$) if there exists a bilinear map from $L^\infty(\mathbb{X},\nu) \times M$ to $M$ given by
\begin{equation}
(f,v) \mapsto f \cdot v,
\end{equation}
called the {\it pointwise multiplication}, with the following properties:
\begin{equation}
(fg) \cdot v = f \cdot (g \cdot v);
\end{equation}
\begin{equation}
1 \cdot v = v;
\end{equation}
\begin{equation}
\| f \cdot v \|_M \leq \| f \|_{\infty} \| v \|_M.
\end{equation}
An {\it $L^\infty$-module} is an $L^\infty$-premodule which additionally satisfies
\begin{enumerate}
\item (Locality) For every $v \in M$ and every sequence of Borel sets $\{ A_n \}_{n \in \mathbb{N}} \subset \mathfrak{B}(\mathbb{X})$
\begin{equation}
\1_{A_n} \cdot v = 0 \,\,\,\, \forall \, n \in \N \quad \Rightarrow \quad \1_{\bigcup_{n \in \N} A_n} \cdot v = 0.
\end{equation}
\item (Gluing) For any sequences $\{ v_n \}_{n \in \N} \subset M$ and $\{ A_n \}_{n \in \mathbb{N}} \subset \mathfrak{B}(\mathbb{X})$ such that
\begin{equation}
\1_{A_i \cap A_j} \cdot v_i = \1_{A_i \cap A_j} \cdot v_j \,\,\, \forall \, i,j \in \N \quad \mbox{and} \quad \limsup_{n \rightarrow \infty} \bigg\| \sum_{i = 1}^n \1_{A_i} \cdot v_i \bigg\|_M < \infty,
\end{equation}
there exists $v \in M$ such that
\begin{equation}
\1_{A_i} \cdot v = \1_{A_i} \cdot v_i \,\,\, \forall \, i \in \N \quad \mbox{and} \quad \| v \|_M \leq \liminf_{n \rightarrow \infty} \bigg\| \sum_{i = 1}^n \1_{A_i} \cdot v_i \bigg\|_M.
\end{equation}
\end{enumerate}
\end{definition}

\begin{definition}
Let $M,N$ be two $L^\infty$-modules. A bounded linear map $T: M \rightarrow N$ between Banach spaces $M$ and $N$ is a module morphism whenever
\begin{equation}
T(f \cdot v) = f \cdot T(v) \qquad \forall v \in M, \,\, f \in L^\infty(\mathbb{X},\nu).
\end{equation}
The set of all module morphisms between $M$ and $N$ will be denoted by ${\rm HOM}(M,N)$. It has a canonical structure of an $L^\infty$-module, equipped (as a Banach space) with the operator norm
\begin{equation}
\| T \| = \sup_{v \in M, \, \| v \|_M \leq 1} \| T(v) \|_N.
\end{equation}
\end{definition}

Observe that $L^1(\mathbb{X},\nu)$ has a structure of an $L^\infty$-module. Therefore, one can define a dual of a given module in the following sense.

\begin{definition}
Let $M$ be an $L^\infty$-module. The dual module $M^*$ is defined by
\begin{equation}
M^* = {\rm HOM}(M, L^1(\mathbb{X},\nu)).
\end{equation}
\end{definition}

\begin{definition}
Fix $p \in [1,\infty]$ and let $M$ be an $L^\infty$-module. We say that $M$ is an {\it $L^p$-normed module}, if there is a nonnegative map $|\cdot|_*: M \rightarrow L^p(\mathbb{X},\nu)$ such that
\begin{equation}
\| |v|_* \|_{L^p(\mathbb{X},\nu)} = \| v \|_M \quad \mbox{and} \quad | f \cdot v |_* = |f| |v|_*
\end{equation}
$\nu$-a.e. for all $f \in L^\infty(\mathbb{X},\nu)$ and all $v \in M$. We call $|\cdot|_*$ the {\it pointwise norm} on $M$.
\end{definition}

Whenever $M$ is an $L^p$-normed module, its dual module $M^*$ is an $L^{p'}$-normed module, where $\frac1p + \frac{1}{p'} = 1$; note that this holds for all $p \in [1,\infty]$. Moreover, the pointwise norm on the dual module $M^*$ is given by
\begin{equation}
|L| = \esssup \{ |L(v)|: \,\, v \in M, \,\, |v|_* \leq 1 \,\, \nu-{\rm a.e.} \}.
\end{equation}
We conclude this general introduction by recalling the notion of generating sets.

\begin{definition}
Let $M$ be an $L^\infty$-module, $A \in \mathfrak{B}(\mathbb{X})$ and $V \subset M$. The span of $V$, denoted ${\rm Span}(V)$, is the subset of $M$ which consists of $v$ such that there exist disjoint $A_n \in \mathfrak{B}(\mathbb{X})$ such that $\mathbb{X} = \bigcup_{i \in \N} A_i$ and for every elements $v_{1,n},...,v_{m_n,n} \in M$ and functions $f_{1,n}, ..., f_{m_n,n} \in L^\infty(\mathbb{X},\nu)$ such that
\begin{equation}
\1_{A_n} \cdot v = \sum_{i = 1}^{m_n} f_{i,n} \cdot v_{i,n}.
\end{equation}
We call the closure $\overline{{\rm Span}(V)}$ the {\it space generated by $V$}.
\end{definition}

We now present the construction a first-order differentiable structure on a given metric measure space $(\mathbb{X},d,\nu)$ due to Gigli \cite{Gig}, i.e., we introduce a notion of cotangent and tangent modules using the general framework described above. From now on, we assume that $\mathbb{X}$ is a complete and separable metric space and $\nu$ is a nonnegative measure which is finite on bounded sets.

\begin{definition}{\rm
The {\it pre-cotangent module} to $\mathbb{X}$ is defined as
$$ \mbox{PCM}_p = \left\{ \{(f_i, A_i)\}_{i \in \N} \ : \ (A_i)_{i \in\N} \subset \mathcal{B}(\mathbb{X}), \  f_i \in D^{1,p}(A_i), \ \ \sum_{i \in \N} \int_{A_i} |Df_i|^p \, d\nu < \infty  \right\},$$
where $A_i$ is a partition of $\mathbb{X}$. We define the equivalence relation $\sim$ as
$$\{(A_i, f_i)\}_{i \in \N} \sim \{(B_j,g_j)\}_{j \in \N} \quad \mbox{if} \quad |D(f_i - g_j)| = 0 \ \ \nu-\hbox{a.e. on} \ A_i \cap B_j.$$
The map $\vert \cdot \vert_* : \sfrac{\mathrm{PCM}_p}{\sim} \rightarrow L^p(\mathbb{X}, \nu)$, well-defined $\nu$-a.e. on $A_i$ for all $i \in \N$ and given by
$$\vert \{(f_i, A_i)\}_{i \in \N} \vert_* := \vert D f_i \vert  \quad \hbox{$\nu$-a.e. on} \ A_i  \,\, \forall \, i \in \N,$$
is the {\it pointwise norm} on $\sfrac{\mathrm{PCM}_p}{\sim}$. In $\sfrac{\mathrm{PCM}_p}{\sim}$ we define the norm $\| \cdot \|$ as
$$ \|  \{(f_i, A_i)\}_{i \in \N} \|^p = \sum_{i\in \N} \int_{A_i}|Df_i|^p, $$
and set $L^p(T^* \mathbb{X})$ to be the closure of $\sfrac{\mathrm{PCM}_p}{\sim}$ with respect to this norm, i.e. we identify functions which differ by a constant and we identify possible rearranging of the sets $A_i$.  We call $L^p(T^* \mathbb{X})$ the {\it cotangent module} and its elements will be called {\it $p$-cotangent vector fields}. It is an $L^p$-normed module.

We denote by $L^{p'}(T\mathbb{X})$ the dual module of $L^p(T^* \mathbb{X})$, namely 
$$L^{p'}(T\mathbb{X}):= \hbox{HOM}(L^p(T^* \mathbb{X}), L^1(\mathbb{X}, \nu));$$
it is a $L^{p'}$-normed module, where $\frac{1}{p} + \frac{1}{p'} = 1$. The elements of $L^{p'}(T\mathbb{X})$ will be called  {\it $p'$-vector fields} on $\mathbb{X}$. The duality between $\omega \in L^p(T^* \mathbb{X})$ and $X \in  L^{p'}(T\mathbb{X})$ will be denoted by $\omega(X) \in L^1(\mathbb{X}, \nu)$. Since the module $L^p(T^* \mathbb{X})$ is reflexive, we can identify}
$$ L^{p'}(T\mathbb{X})^* = L^p(T^*\mathbb{X}).$$
\end{definition}
	
\begin{definition}\label{dfn:differential}
{\rm
Given $f \in D^{1,p}(\mathbb{X})$, we can define its {\it differential} $df$ as an element of $L^p(T^* \mathbb{X})$ given by the formula $df = (f, \mathbb{X})$.}
\end{definition}

Clearly, the operation of taking the differential is linear as an operator from $D^{1,p}(\mathbb{X})$ to $L^p(T^{*} \mathbb{X})$, and it follows from the definition of the norm in $L^p(T^{*} \mathbb{X})$ that this operator is bounded with norm equal to one. The module $L^p(T^*\mathbb{X})$ is generated by differentials of Sobolev functions; in fact, one may equivalently define the cotangent module as the unique $L^p$-normed module generated by the differentials of Sobolev functions, see \cite[Theorem/Definition 2.8]{GigLectNotes}. As a consequence, the function
\begin{equation}
|X| = \esssup \, \{ |df(X)|: \,\, f \in D^{1,p}(\mathbb{X}), \,\, |df|_* \leq 1 \}.
\end{equation}
defines a pointwise norm on $L^{p'}(T\mathbb{X})$.

The differential satisfies the following {\it chain rule} (shown in \cite[Corollary 2.2.8]{Gig} for $p = 2$, but the proof works for all $1 < p < \infty$).

\begin{proposition}\label{prop:chainrule}
For every $f \in D^{1,p}(\mathbb{X})$ and $\varphi : \R \rightarrow \R$ Lipschitz we have
$$d( \varphi \circ  f) = (\varphi' \circ f) \, df \quad \nu-\hbox{a.e.,}$$
where $\varphi' \circ f$ is defined arbitrarily on $f^{-1}($\hbox{$\{$non differentiability points of $\varphi \})$}.
\end{proposition}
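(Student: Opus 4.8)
The plan is to establish the chain rule $d(\varphi\circ f) = (\varphi'\circ f)\,df$ in three stages: first for affine $\varphi$, then for piecewise affine $\varphi$ by a locality argument, and finally for general Lipschitz $\varphi$ by approximation. For affine $\varphi(t) = at + b$, the identity $d(af + b) = a\,df$ is immediate from the linearity of the differential (noted just before the statement) together with the fact that constants have zero differential, which is built into the definition of $L^p(T^*\mathbb{X})$.

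Next I would treat $\varphi$ piecewise affine. Write $\R = \bigcup_k I_k$ as a countable union of intervals on which $\varphi$ is affine, say $\varphi|_{I_k}(t) = a_k t + b_k$, and set $A_k = f^{-1}(I_k)$, a Borel partition of $\mathbb{X}$ (up to the measure-zero set of non-differentiability points, where $\varphi'\circ f$ is defined arbitrarily). The key input here is the locality of the minimal $p$-weak upper gradient: if $f = h$ $\nu$-a.e. on a Borel set $A$ then $|Df| = |Dh|$ $\nu$-a.e. on $A$, and consequently $df = dh$ as elements of $L^p(T^*\mathbb{X})$ when restricted to $A$ (i.e. $\1_A\cdot df = \1_A\cdot dh$). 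Applying this with $h$ the affine function $a_k f + b_k$, which agrees with $\varphi\circ f$ on $A_k$, gives $\1_{A_k}\cdot d(\varphi\circ f) = \1_{A_k}\cdot d(a_k f + b_k) = a_k\,\1_{A_k}\cdot df = \1_{A_k}\cdot(\varphi'\circ f)\,df$ on each $A_k$; summing over $k$ via the locality/gluing axioms of the module yields the identity on all of $\mathbb{X}$. One must also check that $\varphi\circ f \in D^{1,p}(\mathbb{X})$, which follows since $|\varphi'|\,|Df| \in L^p(\mathbb{X},\nu)$ is a $p$-weak upper gradient of $\varphi\circ f$ by the pointwise estimate $|Df|\cdot(\mathrm{Lip}\,\varphi)$.

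For general Lipschitz $\varphi$, I would approximate by piecewise affine functions $\varphi_n$ with $\varphi_n \to \varphi$ locally uniformly, $\varphi_n' \to \varphi'$ $\mathcal{L}^1$-a.e., and $\|\varphi_n'\|_\infty \leq \mathrm{Lip}\,\varphi$ uniformly (e.g. piecewise linear interpolation on a refining dyadic grid, where a.e. convergence of derivatives holds at Lebesgue points of $\varphi'$). Then $\varphi_n\circ f \to \varphi\circ f$ in $L^p$, the gradients $|D(\varphi_n\circ f)| = |\varphi_n'\circ f|\,|Df|$ are uniformly bounded in $L^p(\mathbb{X},\nu)$ by $(\mathrm{Lip}\,\varphi)|Df|$, and by lower semicontinuity / closure of the differential one passes to the limit to obtain $\varphi\circ f \in D^{1,p}(\mathbb{X})$ with $d(\varphi\circ f) = \lim_n d(\varphi_n\circ f) = \lim_n (\varphi_n'\circ f)\,df$. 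It remains to identify the limit: since $\varphi_n'\to\varphi'$ $\mathcal{L}^1$-a.e. and $|Df| = 0$ $\nu$-a.e. on the level set $f^{-1}(Z)$ for any $\mathcal{L}^1$-null set $Z$ (again by locality of the upper gradient — this is exactly why $\varphi'\circ f$ being ill-defined on a null set does not matter), dominated convergence in $L^p(T^*\mathbb{X})$ gives $(\varphi_n'\circ f)\,df \to (\varphi'\circ f)\,df$.

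The main obstacle is the last identification step: one needs that the null set of non-differentiability points of $\varphi$ pulls back under $f$ to a set on which $df$ vanishes, so that the arbitrariness in the definition of $\varphi'\circ f$ is harmless and the a.e. convergence $\varphi_n'\to\varphi'$ transfers through composition with $f$. This again rests on the locality property $|Df| = 0$ $\nu$-a.e. on $f^{-1}(Z)$ for $\mathcal{L}^1$-null $Z$, which for $p=2$ is part of the standard calculus of upper gradients and whose proof (via the chain rule for affine maps plus approximation, or directly from the definition of $p$-modulus) carries over verbatim to $1 < p < \infty$; this is the sense in which "the proof works for all $1 < p < \infty$."
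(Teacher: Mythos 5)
Your proposal is correct and follows essentially the same route as the proof the paper relies on: the paper does not prove this proposition itself but cites Gigli's memoir, whose argument proceeds exactly through the affine, piecewise-affine, and Lipschitz-approximation stages you describe, hinging on the same two locality facts (identification of differentials on sets where the functions agree, and vanishing of $|Df|$ on preimages of $\mathcal{L}^1$-null sets). The one point to tighten is in your piecewise-affine step: to conclude $\1_{A_k}\cdot d(\varphi\circ f)=\1_{A_k}\cdot d(a_kf+b_k)$ you need $|D(\varphi\circ f-(a_kf+b_k))|=0$ $\nu$-a.e.\ on $A_k$ --- which is the form of locality built into the equivalence relation defining $L^p(T^*\mathbb{X})$ --- rather than mere equality of the pointwise norms $|Df|=|Dh|$ on $A_k$ (which alone does not identify the differentials, e.g.\ $df$ and $-df$ have equal pointwise norm); the stronger statement follows from the same locality principle applied to the difference of the two functions.
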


For a vector field in $L^{p'}(T\mathbb{X})$, it is also possible to define its divergence via an integration by parts formula, in the case when it can be represented by an integrable function. Let us recall the definition in the form given in \cite{GM2021} (see \cite{BCM,DiMarinoTh} for a detailed discussion of the case when the exponents coincide). For $\frac1r + \frac1s = 1$, we set
\begin{align*}
\mathcal{D}^{p',r}(\mathbb{X}) = \bigg\{ X \in L^{p'}(T\mathbb{X}): \,\, \exists f \in L^r(\mathbb{X},\nu) \quad \forall g \in &W^{1,p}(\mathbb{X},d,\nu) \cap L^s(\mathbb{X},\nu) \\
&\int_\mathbb{X} fg \, d\nu = - \int_{\mathbb{X}} dg(X) \, d\nu \bigg\}.
\end{align*}
In the above definition, the right hand side makes sense as an action of an element of $L^p(T^* \mathbb{X})$ on an element of $L^{p'}(T\mathbb{X})$; the resulting function is an element of $L^1(\mathbb{X},\nu)$. The function $f$, which is unique by the density of $W^{1,p}(\mathbb{X},d,\nu)$ in $L^{p}(\mathbb{X},\nu)$, will be called the {\it $(p',r)$-divergence} of the vector field $X$, and we shall write $\mbox{div}(X) = f$. Whenever Lipschitz functions are dense in $W^{1,p}(\mathbb{X},d,\nu)$ (e.g. as in \cite[Theorem 5.1]{BB}), the divergence does not depend on $r$ in the following sense: if $f$ is the $(p',r)$-divergence of $X$ and $f \in L^{s}(\mathbb{X},\nu)$, then it is also the $(p',s)$-divergence of $X$. However, in this paper this is of minor importance as we will only consider the case $r = 2$. A more important point concerns the dependence of the divergence on $q$. It was observed in \cite{BCM} that whenever the minimal $q$-weak upper gradient does not depend on $q$, then also the differentials and divergence do not depend on $q$. Therefore, in the part of the paper which deals with inhomogeneous growth (Section \ref{sec:inhomogeneous}) we will assume that $(\mathbb{X},d,\nu)$ is an RCD space.

The first order differential structure presented above will be our main tool in this paper. Note that a priori it is not defined locally - the objects $T^* \mathbb{X}$ and $T \mathbb{X}$ are not necessarily well-defined (the notation $L^p(T^* \mathbb{X})$ and $L^{p'}(T\mathbb{X})$ is purely formal). However, when the metric measure space is $(\mathbb{R}^N,d_{Eucl}, \mathcal{L}^N)$, the vector fields and differentials arising from this construction coincide with their standard counterparts defined in coordinates \cite{Gig}, and in a more general setting some positive results on pointwise identification of the tangent and cotangent modules can be found in \cite{EBS,LP,LPR}.

\subsection{Subdifferentials of convex functionals and gradient flows}\label{subsec:gradientflows}

Let us now briefly recall the notion of a subdifferential of a convex function defined on a Banach space $V$. To this end, we first introduce some notation for multivalued operators between Banach spaces. Then, we recall a few classical results on semigroup solutions to gradient flows in Hilbert spaces. We follow the presentation in \cite{EkelandTemam} and \cite{Brezis} respectively. 

A multivalued operator $A$ between two Banach spaces $V$ and $W$ can be described in two equivalent ways: as a mapping $A: V \rightarrow 2^W$ or equivalently one may identify $A$ with its graph, i.e., for every $v \in V$ we set
$$ A(v) : = \left\{ w \in W: \,  (v,w) \in A \right\}.$$
We denote the domain of $A$ by
$$ D(A) : = \left\{ v \in V: \, A(v) \not= \emptyset \right\}$$
and the range of $A$ by
$$R(A) : = \bigcup_{v \in D(A)} Av.$$
A classical example of a multivalued operator is the subdifferential of a convex function, which is shortly described below.

Given a Banach space $V$ and a functional $\mathcal{F}: V \to (-\infty, \infty]$, we call the set
$$D(\mathcal{F}) : = \{ v \in V: \ \mathcal{F}(v) < + \infty \}$$
the effective domain of $\mathcal{F}$. The functional $\mathcal{F}$ is said to be proper if $D(\mathcal{F})$ is nonempty.
Furthermore, we say that $\mathcal{F}$ is lower semicontinuous if for every $c \in \R$ the sublevel set $\{ v \in D(\mathcal{F}): \ \mathcal{F}(v) \leq c \}$ is closed in $V$.

Given a proper and convex functional $\mathcal{F} : V \to (-\infty, \infty]$, its {\it subdifferential} is the set
\begin{equation}
\partial \mathcal{F} := \left\{(v_0,v^*) \in V \times V^*: \ \mathcal{F}(v_0 + v) - \mathcal{F}(v_0) \geq \langle v, v^* \rangle_{(V,V^*)} \ \ \forall \, v \in V \right\}.
\end{equation}
It is a generalisation of the derivative; in the case when $\mathcal{F}$ is Fr\'{e}chet differentiable, its subdifferential is single-valued and equals the Fr\'{e}chet derivative. Since we identify a multivalued operator with its graph, given $v_0 \in V$ we may equivalently write that
\begin{equation}
\partial \mathcal{F}(v_0) := \left\{ v^* \in V^*: \, \mathcal{F}(v_0 + v) - \mathcal{F}(v_0) \geq \langle v, v^* \rangle_{(V,V^*)} \ \ \forall \, v \in V \right\}.
\end{equation}
One of the classical tools used to characterise the subdifferential of a convex functional $\mathcal{F}$ is convex duality (a standard reference is \cite[Chapter III.4]{EkelandTemam}). First, recall the notion of the Legendre-Fenchel transform: given a Banach space $V$ and $F: V \rightarrow \mathbb{R} \cup \{ + \infty \}$, we define $F^*: V^* \rightarrow \mathbb{R} \cup \{ + \infty \}$ by the formula
\begin{equation*}
F^*(v^*) = \sup_{v \in V} \bigg\{ \langle v, v^* \rangle - F(v) \bigg\}.
\end{equation*}
Then, let us briefly describe how the dual problem is typically defined in the setting of calculus of variations. 

Let $U, V$ be two Banach spaces and let $A: U \rightarrow V$ be a continuous linear operator. Denote by $A^*: V^* \rightarrow U^*$ its dual. Then, if the primal problem is of the form
\begin{equation}\tag{P}\label{eq:primal}
\inf_{u \in U} \bigg\{ E(Au) + G(u) \bigg\},
\end{equation}
where $E: V \rightarrow (-\infty,+\infty]$ and $G: U \rightarrow (-\infty,+\infty]$ are proper, convex and lower semicontinuous, then the dual problem is defined as the maximisation problem
\begin{equation}\tag{P*}\label{eq:dual}
\sup_{v^* \in V^*} \bigg\{ - E^*(-v^*) - G^*(A^* v^*) \bigg\}
\end{equation}
It turns out that in the above setting the minimum value in the primal problem equals the maximum value in the dual problem; this is the content of the Fenchel-Rockafellar duality theorem, which we recall in the form given in \cite[Remark III.4.2]{EkelandTemam}. 

\begin{theorem}\label{thm:fenchelrockafellartheorem}
Suppose that $E$ and $G$ are proper, convex and lower semicontinuous. If there exists $u_0 \in U$ such that $E(A u_0) < \infty$, $G(u_0) < \infty$ and $E$ is continuous at $A u_0$, then
$$\inf \eqref{eq:primal} = \sup \eqref{eq:dual}$$
and the dual problem \eqref{eq:dual} has at least one solution. Moreover, if $u$ is a solution of \eqref{eq:primal} and $v^*$ is a solution of \eqref{eq:dual}, the following extremality conditions hold:
\begin{equation}
E(Au) + E^*(-v^*) = \langle -v^*, Au \rangle;
\end{equation}
\begin{equation}
G(u) + G^*(A^* v^*) = \langle u, A^* v^* \rangle.
\end{equation}
\end{theorem}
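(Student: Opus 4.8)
The Fenchel–Rockafellar duality theorem is classical, so the plan is to indicate how its proof goes rather than reprove it in full. The core identity $\inf\eqref{eq:primal}=\sup\eqref{eq:dual}$ follows from the Hahn–Banach separation theorem applied to an auxiliary convex perturbation. First I would introduce the value function $h:V\to(-\infty,+\infty]$ defined by
\[
h(y)=\inf_{u\in U}\bigl\{E(Au+y)+G(u)\bigr\},
\]
so that $\inf\eqref{eq:primal}=h(0)$. One checks that $h$ is convex (this uses convexity of $E$ and $G$ and linearity of $A$), and the continuity hypothesis on $E$ at $Au_0$ together with $G(u_0)<\infty$ guarantees that $h$ is bounded above on a neighbourhood of $0$, hence finite and continuous at $0$; in particular $\partial h(0)\neq\emptyset$. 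The next step is to compute the Legendre–Fenchel transform of $h$: a direct manipulation gives $h^*(v^*)=E^*(v^*)+G^*(-A^*v^*)$, so that the dual functional in \eqref{eq:dual} is $-h^*(-v^*)$. Since $h$ is convex, proper, and continuous at $0$, it is lower semicontinuous at $0$ and equals its biconjugate there, so $h(0)=h^{**}(0)=\sup_{v^*\in V^*}\{-h^*(-v^*)\}$, which is exactly the asserted equality $\inf\eqref{eq:primal}=\sup\eqref{eq:dual}$.

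For the existence of a dual solution, the point is that any $-v^*\in\partial h(0)$ is a maximiser of the dual problem: indeed $\xi\in\partial h(0)$ means $h(0)+h^*(\xi)=\langle\xi,0\rangle=0$, i.e. $-h^*(-v^*)=h(0)=\sup\eqref{eq:dual}$. Since $h$ is continuous at $0$, the subdifferential $\partial h(0)$ is nonempty (this is the one place where the continuity of $E$ at $Au_0$ is essential — without it, a duality gap may appear, and the dual problem need not be attained), which furnishes the required solution $v^*$.

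Finally, the extremality conditions follow from the chain of inequalities that becomes an equality. If $u$ solves \eqref{eq:primal} and $v^*$ solves \eqref{eq:dual}, then
\[
E(Au)+G(u)=-E^*(-v^*)-G^*(A^*v^*),
\]
which I would rewrite as
\[
\bigl[E(Au)+E^*(-v^*)-\langle -v^*,Au\rangle\bigr]+\bigl[G(u)+G^*(A^*v^*)-\langle u,A^*v^*\rangle\bigr]=\langle -v^*,Au\rangle+\langle u,A^*v^*\rangle - \langle u, A^* v^*\rangle - \langle -v^*, Au\rangle = 0,
\]
using $\langle -v^*,Au\rangle=\langle A^*(-v^*),u\rangle=-\langle u,A^*v^*\rangle$ to see the right-hand side vanishes. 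By the Fenchel–Young inequality each of the two bracketed terms is nonnegative, so each must be zero, which is precisely the pair of extremality conditions. The main obstacle, and the only genuinely delicate point, is establishing that $h$ is finite and continuous near $0$ so that $\partial h(0)\neq\emptyset$; everything else is bookkeeping with conjugates and the Fenchel–Young inequality.
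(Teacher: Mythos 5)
Your proposal is correct, but note that the paper does not prove this statement at all: it is recalled verbatim from Ekeland--Temam (Remark III.4.2) as a classical tool, so there is no in-paper proof to compare against. Your sketch via the marginal function $h(y)=\inf_u\{E(Au+y)+G(u)\}$, its continuity at $0$, the identity $h^*(v^*)=E^*(v^*)+G^*(-A^*v^*)$, and the Fenchel--Young splitting of the extremality conditions is precisely the standard argument in that reference, and all the computations you give check out. The only point worth flagging is the implicit assumption $h(0)>-\infty$ (i.e.\ $\inf\eqref{eq:primal}$ finite), without which $\partial h(0)$ is empty and the dual attainment statement degenerates; this is harmless here since in the paper's application $\mathcal{G}=\mathcal{F}+G$ is bounded below.
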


Let us turn our attention to the gradient flows of convex functionals in Hilbert spaces. We say that a multivalued operator $A$ on a Hilbert space $H$ is {\it monotone} if
$$\langle u - \hat{u}, v - \hat{v} \rangle_H \geq 0$$
for all $(u,v), (\hat{u}, \hat{v}) \in A.$
If there is no monotone operator which strictly contains $A$, we say that $A$ is maximal monotone. The prime example of a multivalued operator is the subdifferential; if $\mathcal{F} : H \to (-\infty, \infty]$ is convex and lower semicontinuous, then $\partial \mathcal{F}$ is a maximal monotone multivalued operator on $H$. In general, the following theorem due to Minty provides a characterisation of maximal monotone operators.

\begin{theorem}\label{thm:minty}
Let $H$ be a Hilbert space. A monotone multivalued operator $A: H \rightarrow 2^H$ is maximal monotone if and only if $R(I + A) = H$. 
\end{theorem}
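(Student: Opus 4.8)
The plan is to prove the two implications separately, with the forward direction being essentially formal and the reverse direction carrying the analytic content.

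For the implication ``maximal monotone $\Rightarrow R(I+A) = H$'', the key step is to show surjectivity of $I + A$, and the standard route is a fixed-point / minimization argument. Fix $h \in H$; I want to solve $u + Au \ni h$, i.e. find $u$ with $h - u \in Au$. The trick is to realize $(I+A)^{-1}$ as a contraction by combining monotonicity with a resolvent estimate: if $u + Au \ni h$ and $\hat u + A\hat u \ni \hat h$, then subtracting and pairing with $u - \hat u$ gives $\|u - \hat u\|^2 \leq \langle h - \hat h, u - \hat u\rangle \leq \|h - \hat h\|\,\|u - \hat u\|$, so the resolvent is $1$-Lipschitz wherever it is defined. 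To actually produce a solution one approximates: for $A = \partial\mathcal F$ the resolvent is the proximal map, obtained by minimizing $v \mapsto \tfrac12\|v - h\|^2 + \mathcal F(v)$, which has a unique minimizer by lower semicontinuity, convexity and coercivity of the functional, and whose Euler-Lagrange inclusion is exactly $h - u \in \partial\mathcal F(u)$. For a general maximal monotone $A$ one instead argues abstractly: the set $\{h : u + Au \ni h \text{ solvable}\}$ is both open and closed in $H$ (openness via a Banach fixed-point perturbation argument using the $1$-Lipschitz resolvent estimate, closedness via the same estimate plus the fact that the graph of a maximal monotone operator is closed), hence equals $H$ since it is nonempty and $H$ is connected.

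For the implication ``$R(I+A) = H \Rightarrow A$ maximal monotone'', suppose $A$ is monotone with $R(I+A) = H$ but not maximal; then there is $(u_0, v_0) \notin A$ with $\langle u - u_0, v - v_0\rangle \geq 0$ for all $(u,v) \in A$. By surjectivity of $I + A$ there exists $(u_1, v_1) \in A$ with $u_1 + v_1 = u_0 + v_0$. Applying the monotonicity inequality to the pair $(u_1, v_1)$ gives $0 \leq \langle u_1 - u_0, v_1 - v_0\rangle = \langle u_1 - u_0, (u_0 + v_0 - u_1) - v_0\rangle = -\|u_1 - u_0\|^2$, forcing $u_1 = u_0$ and hence $v_1 = v_0$, so $(u_0,v_0) = (u_1,v_1) \in A$, a contradiction.

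The main obstacle is the surjectivity step in the forward direction at the level of a general maximal monotone operator (not just a subdifferential): one must show that the range is closed and open without a variational representation. The cleanest argument uses the Lipschitz resolvent estimate to set up a contraction mapping on small perturbations of a solvable datum, together with the closedness of the graph of $A$ (which follows from maximality: if $(u_n, v_n) \in A$ converge to $(u,v)$, then $(u,v)$ can be added to $A$ without violating monotonicity, so by maximality it already lies in $A$). Since in our application $A = \partial\mathcal F$, one may alternatively invoke the proximal minimization directly and skip the abstract topological argument entirely, which is the route I would present in detail.
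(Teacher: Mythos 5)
The paper does not prove this statement at all --- Minty's theorem is quoted as a classical result, and in the proof of Theorem \ref{thm:generalresult} only the ``if'' direction ($R(I+A)=H$ for a monotone $A$ implies maximality) is actually invoked. Your proof of that direction is complete and correct: it is the standard two-line argument of producing $(u_1,v_1)\in A$ with $u_1+v_1=u_0+v_0$ and extracting $-\|u_1-u_0\|^2\geq 0$. Your treatment of the converse for $A=\partial\mathcal F$ via the proximal minimization of $v\mapsto\tfrac12\|v-h\|^2+\mathcal F(v)$ is also correct, and this special case is all the paper needs (it asserts maximality of $\partial\mathcal F$ as a known fact).

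There is, however, a genuine gap in your sketch of the hard direction for a \emph{general} maximal monotone $A$. The closedness and nonemptiness of $S=R(I+A)$ go through as you say (the $1$-Lipschitz resolvent estimate plus closedness of the graph), but the openness step cannot be obtained by ``a Banach fixed-point perturbation argument using the $1$-Lipschitz resolvent estimate.'' The resolvent $(I+A)^{-1}$ is defined only on $S$, which is precisely the set whose openness you are trying to establish, so there is no complete metric space on which to set up the iteration; moreover the estimate gives Lipschitz constant exactly $1$, not a strict contraction. The standard perturbation lemma of this type (maximality of $A+B$ for $B$ Lipschitz monotone and everywhere defined) already presupposes that $(I+A)^{-1}$ is globally defined, i.e.\ it presupposes Minty's theorem for $A$. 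The known proofs of the general surjectivity statement instead go through Brouwer's fixed-point theorem (finite-dimensional Galerkin approximation, or the Debrunner--Flor lemma, giving a point $u$ with $\langle u-x,(h-u)-y\rangle\geq 0$ for all $(x,y)\in A$, whence $(u,h-u)\in A$ by maximality), or through the Kirszbraun--Valentine extension theorem applied to the nonexpansive map $x+y\mapsto x-y$ on $R(I+A)$. Since you explicitly fall back on the subdifferential case for the actual application, the proposal is adequate for the role the theorem plays in this paper, but as a proof of the stated theorem the general case is not closed.
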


The natural function spaces for solutions to gradient flows of convex functionals in Hilbert spaces are the following. For $1 \leq p < \infty$, denote
$$L^p(a,b; H):= \left\{ u : [a,b] \rightarrow H \ \hbox{measurable such that} \  \int_a^b \Vert u(t) \Vert_H^p \, dt < \infty \right\}$$
and
\begin{align}
W^{1,p}(a,b;H):= &\bigg \{ u \in L^p(a,b; H) \ \hbox{and} \ \exists \, v \in L^p(a,b; H): \\
&\qquad\qquad\qquad\qquad u(t) - u(a) = \int_a^t v(s) \, ds \ \ \forall \, t \in (a,b)  \bigg \}.
\end{align}
If $u \in W^{1,p}(a,b;H)$, it is differentiable for almost all $t \in (a,b)$ and
$$u(t) - u(a) = \int_a^t \frac{du}{dt}(s) \, ds \ \ \forall \, t \in (a,b).$$
We also set $W_{\rm loc}^{1,p}(0,T;H)$ to be the space of all functions $u$ with the following property: for all $0 < a < b < T$, we have that $u \in W^{1,p}(a,b;H)$.

Consider the abstract Cauchy problem
\begin{equation}\label{eq:abstractcauchy}
\left\{ \begin{array}{ll} 0 \in \frac{du}{dt} + \partial \mathcal{F} (u(t)) \, \quad &\mbox{for } t \in (0, T);  \\
[10pt] u(0) = u_0, \quad & \end{array} \right.
\end{equation}
where $u_0 \in H$.

\begin{definition} 
We say that $u \in C([0,T]; H)$ is a strong solution to problem \eqref{eq:abstractcauchy}, if the following conditions hold: $u \in W_{\rm loc}^{1,2}(0,T;H)$; for almost all $t \in (0,T)$ we have $u(t) \in D(\partial \mathcal{F})$; and it satisfies \eqref{eq:abstractcauchy}.
\end{definition}

The following result, called the Brezis-Komura theorem, summarises the existence theory for solutions to the abstract Cauchy problem \eqref{eq:abstractcauchy} obtained using the semigroup approach. In this paper, we will only apply this result with $H = L^2(\mathbb{X},\nu)$.

\begin{theorem}\label{thm:breziskomura}
Let $\mathcal{F} : H \to (-\infty, \infty]$ be a proper, convex, and lower semicontinuous functional. Given $u_0 \in \overline{D(\mathcal{F})}$, there exists a unique strong solution to the abstract Cauchy problem \eqref{eq:abstractcauchy}. If additionally $u_0 \in D(\mathcal{F})$, then $u \in W^{1,2}(0, T; H)$. 
\end{theorem}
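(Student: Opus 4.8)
The plan is to prove uniqueness by a monotonicity argument and existence by the Yosida regularisation scheme, first for regular initial data and then by a density argument for general $u_0 \in \overline{D(\mathcal{F})}$.

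\emph{Uniqueness and contraction.} Suppose $u,\hat u \in C([0,T];H)$ are strong solutions with initial data $u_0,\hat u_0$. For a.e. $t$ we have $-\frac{du}{dt}(t) \in \partial\mathcal{F}(u(t))$ and $-\frac{d\hat u}{dt}(t) \in \partial\mathcal{F}(\hat u(t))$, so monotonicity of $\partial\mathcal{F}$ gives
$$\tfrac12 \tfrac{d}{dt}\|u(t) - \hat u(t)\|_H^2 = \Big\langle \tfrac{du}{dt}(t) - \tfrac{d\hat u}{dt}(t), u(t) - \hat u(t)\Big\rangle_H \leq 0 ,$$
hence $\|u(t) - \hat u(t)\|_H \leq \|u_0 - \hat u_0\|_H$ for all $t$. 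Taking $u_0 = \hat u_0$ yields uniqueness; the contraction estimate for general data is used below.

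\emph{Existence for $u_0 \in D(\partial\mathcal{F})$.} Since $\partial\mathcal{F}$ is maximal monotone, Theorem \ref{thm:minty} applied to $\lambda\partial\mathcal{F}$ gives $R(I + \lambda\partial\mathcal{F}) = H$ for every $\lambda > 0$, so the resolvent $J_\lambda := (I + \lambda\partial\mathcal{F})^{-1}$ is a well-defined, single-valued, $1$-Lipschitz map on $H$, and the Yosida approximation $A_\lambda := \lambda^{-1}(I - J_\lambda)$ is monotone and $\lambda^{-1}$-Lipschitz, with $A_\lambda v \in \partial\mathcal{F}(J_\lambda v)$ for all $v$ and $\|A_\lambda v\|_H$ bounded by the norm of the minimal-norm element of $\partial\mathcal{F}(v)$ when $v \in D(\partial\mathcal{F})$. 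As $A_\lambda$ is globally Lipschitz, the Cauchy problem $\frac{du_\lambda}{dt} + A_\lambda(u_\lambda) = 0$, $u_\lambda(0) = u_0$, has a unique global solution $u_\lambda \in C^1([0,T];H)$. The semigroup generated by $-A_\lambda$ is a contraction, so $t \mapsto \|\frac{du_\lambda}{dt}(t)\|_H$ is nonincreasing and hence $\leq \|A_\lambda u_0\|_H \leq C_0$ uniformly in $\lambda$ and $t$, where $C_0$ is the minimal-norm bound above. Testing the difference of the equations for $u_\lambda$ and $u_\mu$ with $u_\lambda - u_\mu$, using the decomposition $u_\lambda - u_\mu = (J_\lambda u_\lambda - J_\mu u_\mu) + \lambda A_\lambda u_\lambda - \mu A_\mu u_\mu$ and the inequality $\langle A_\lambda u_\lambda - A_\mu u_\mu,\, J_\lambda u_\lambda - J_\mu u_\mu\rangle_H \geq 0$, one obtains $\frac{d}{dt}\|u_\lambda(t) - u_\mu(t)\|_H^2 \leq 2(\lambda + \mu)C_0^2$, so $\{u_\lambda\}$ is Cauchy in $C([0,T];H)$; call the limit $u$. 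Since $\|\lambda A_\lambda u_\lambda\|_H \leq \lambda C_0 \to 0$, also $J_\lambda u_\lambda \to u$ uniformly, while $\frac{du_\lambda}{dt}$ is bounded in $L^\infty(0,T;H)$, so up to a subsequence $\frac{du_\lambda}{dt} \rightharpoonup \frac{du}{dt}$ weakly in $L^2(0,T;H)$ and $u \in W^{1,\infty}(0,T;H)$. Finally $-\frac{du_\lambda}{dt} = A_\lambda u_\lambda \in \partial\mathcal{F}(J_\lambda u_\lambda)$ with $J_\lambda u_\lambda \to u$ strongly and $A_\lambda u_\lambda \rightharpoonup -\frac{du}{dt}$ weakly in $L^2(0,T;H)$; the strong–weak closedness of the graph of the maximal monotone operator $\partial\mathcal{F}$ (equivalently, the weak lower semicontinuity of $v \mapsto \int_0^T \mathcal{F}(v(t))\,dt$ together with the convexity inequality for $\mathcal{F}$) then gives $-\frac{du}{dt}(t) \in \partial\mathcal{F}(u(t))$ for a.e.\ $t$. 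Thus $u$ is a strong solution, with $u \in W^{1,\infty}(0,T;H)$.

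\emph{Extension to $u_0 \in \overline{D(\mathcal{F})}$ and the $W^{1,2}$ statement.} Because $\overline{D(\partial\mathcal{F})} = \overline{D(\mathcal{F})}$, pick $u_0^n \in D(\partial\mathcal{F})$ with $u_0^n \to u_0$ in $H$ and let $u^n$ be the corresponding strong solutions; by the contraction estimate $u^n \to u$ in $C([0,T];H)$ with $u(0) = u_0$. To identify $u$ as a strong solution we need a uniform interior bound. Fixing any $v \in D(\mathcal{F})$, the convexity inequality $\langle -\frac{du^n}{dt}(t), u^n(t) - v\rangle_H \geq \mathcal{F}(u^n(t)) - \mathcal{F}(v)$ together with the nonincrease of $t \mapsto \mathcal{F}(u^n(t))$ (which follows from the chain rule $\frac{d}{dt}\mathcal{F}(u^n(t)) = -\|\frac{du^n}{dt}(t)\|_H^2 \leq 0$) gives, after integrating on $[0,t]$,
$$\mathcal{F}(u^n(t)) \leq \mathcal{F}(v) + \tfrac{1}{2t}\,\|u_0^n - v\|_H^2 \qquad (0 < t \leq T).$$
Using that $\mathcal{F}$ is bounded below (in the applications of this paper $\mathcal{F} \geq 0$) and integrating $t\,\frac{d}{dt}\mathcal{F}(u^n(t))$ on $[\delta,T]$ then bounds $\int_\delta^T t\|\frac{du^n}{dt}(t)\|_H^2\,dt$, hence $\int_\delta^T \|\frac{du^n}{dt}(t)\|_H^2\,dt$, uniformly in $n$ for each $\delta > 0$. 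So a subsequence of $u^n$ converges weakly in $W^{1,2}(\delta,T;H)$, and arguing as in the previous step on $[\delta,T]$ and letting $\delta \to 0$ shows $u \in W^{1,2}_{\mathrm{loc}}(0,T;H)$ is a strong solution. If moreover $u_0 \in D(\mathcal{F})$, choose $u_0^n = J_{1/n}u_0$, so that $\mathcal{F}(u_0^n) \leq \mathcal{F}(u_0)$ and $u_0^n \to u_0$; then $\int_0^T \|\frac{du^n}{dt}(t)\|_H^2\,dt = \mathcal{F}(u_0^n) - \mathcal{F}(u^n(T)) \leq \mathcal{F}(u_0)$, and lower semicontinuity of the $L^2(0,T;H)$-norm under weak convergence yields $\frac{du}{dt} \in L^2(0,T;H)$, i.e.\ $u \in W^{1,2}(0,T;H)$.

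\emph{Main obstacle.} The two genuinely non-elementary points are the passage to the limit $A_\lambda u_\lambda \in \partial\mathcal{F}(J_\lambda u_\lambda) \Rightarrow -\frac{du}{dt} \in \partial\mathcal{F}(u)$, which rests on demiclosedness of the graph of a maximal monotone operator, and the chain rule $\frac{d}{dt}\mathcal{F}(u(t)) = \langle \frac{du}{dt}(t), \xi(t)\rangle_H$ for any selection $\xi(t) \in \partial\mathcal{F}(u(t))$ along an absolutely continuous curve, which underlies both the regularising estimate and the $W^{1,2}$ conclusion. Both are classical ingredients of the Brezis–Komura theory; we refer to \cite{Brezis} for the details.
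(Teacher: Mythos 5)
The paper does not prove this statement: Theorem \ref{thm:breziskomura} is imported as the classical Brezis--Komura theorem, with a citation to \cite{Brezis} (and \cite{Komura}), so there is no in-paper argument to compare against. Your proposal reconstructs the standard proof from that reference --- uniqueness and the contraction estimate from monotonicity, existence for $u_0 \in D(\partial\mathcal{F})$ via the Yosida approximation $A_\lambda$ and the Cauchy estimate for $\{u_\lambda\}$, then the regularising estimate $\mathcal{F}(u^n(t)) \leq \mathcal{F}(v) + \frac{1}{2t}\|u_0^n - v\|_H^2$ to extend to $u_0 \in \overline{D(\mathcal{F})} = \overline{D(\partial\mathcal{F})}$, and the energy identity $\int_0^T \|\frac{du}{dt}\|_H^2\,dt \leq \mathcal{F}(u_0) - \inf \mathcal{F}$ for the $W^{1,2}$ claim. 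This is correct in outline, and you honestly isolate the two genuinely nontrivial ingredients (demiclosedness of the maximal monotone graph in $L^2(0,T;H)$, and the chain rule $\frac{d}{dt}\mathcal{F}(u(t)) = -\|\frac{du}{dt}(t)\|_H^2$), deferring them to \cite{Brezis} exactly as the paper itself does.

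Two small points of care. First, a general proper convex lower semicontinuous $\mathcal{F}$ on $H$ need not be bounded below globally; what saves the regularising estimate is that $\mathcal{F}$ is bounded below by a continuous affine functional, and the trajectories $u^n(t)$ remain in a bounded set by the contraction property, so $\mathcal{F}(u^n(t))$ is uniformly bounded below --- your hedge ``in the applications of this paper $\mathcal{F} \geq 0$'' is sufficient for the paper's purposes but the general case also goes through. Second, the numerical constants in the Cauchy estimate for $u_\lambda - u_\mu$ (and the Lipschitz constant of $A_\lambda$) are slightly off, but this is immaterial to the argument.
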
 

We refer to \cite[Theorem 3.2]{Brezis} for a summary of main additional properties of solutions; let us only briefly mention the semigroup property, the $T$-contraction property, and the regularity of time derivative. If we denote by $S(t) u_0$ the unique strong solution $u(t)$ of the abstract Cauchy problem \eqref{eq:abstractcauchy} for initial data $u_0$, then $S(t) : \overline{D(\mathcal{F})} \rightarrow H$ is a continuous semigroup satisfying the $T$-contraction property
$$\Vert (S(t) u_0 - S(t) v_0) \Vert_H \leq \Vert u_0 -  v_0 \Vert_H$$
for all $u_0, v_0 \in \overline{D(\mathcal{F})}$ and $t > 0$. Furthermore, we have that $\frac{du}{dt} \in L^2_{\rm loc}(0, T; H)$, and the function $t \mapsto \mathcal{F}(u(t))$ is convex, decreasing, and locally Lipschitz with the derivative (defined for a.e. $t > 0$)
\begin{equation}
\frac{d}{dt} \mathcal{F}(u(t)) = - \bigg\| \frac{du}{dt}(t) \bigg\|_H^2 = - \bigg\| \partial^- \mathcal{F}(u(t)) \bigg\|_H^2,
\end{equation}
where $\partial^- \mathcal{F}(\cdot)$ denotes the element of minimal norm in $\partial\mathcal{F}(\cdot)$. In fact, $\frac{du}{dt}(t) = \partial^- \mathcal{F}(u(t))$.

We conclude this subsection by the following result given in \cite[Proposition 2.11]{Brezis}, which is useful to characterise the closure of the domain of the subdifferential.

\begin{proposition}\label{prop:densityofthedomain} 
Let $\mathcal{F}: H \to (-\infty, \infty]$ be a proper, convex, and lower semicontinuous functional. Then,
$$D(\partial \mathcal{F}) \subset D( \mathcal{F}) \subset \overline{ D( \mathcal{F})} \subset \overline{ D( \partial \mathcal{F})}.$$
\end{proposition}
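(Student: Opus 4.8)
The plan is to treat the chain of inclusions from left to right, the first two being essentially definitional and the last one, $\overline{D(\mathcal{F})} \subset \overline{D(\partial\mathcal{F})}$, being the substantive part, which I would handle via a Moreau--Yosida / proximal approximation.

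For $D(\partial\mathcal{F}) \subset D(\mathcal{F})$: if $v^* \in \partial\mathcal{F}(v_0)$, then the defining inequality $\mathcal{F}(v_0 + v) - \mathcal{F}(v_0) \geq \langle v, v^* \rangle_{(H,H)}$ only makes sense with $\mathcal{F}(v_0) < +\infty$ (and taking $v = 0$ confirms this is the intended reading), so $v_0 \in D(\mathcal{F})$. The inclusion $D(\mathcal{F}) \subset \overline{D(\mathcal{F})}$ is trivial, and since closures are monotone, for the last inclusion it suffices to prove $D(\mathcal{F}) \subset \overline{D(\partial\mathcal{F})}$.

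To this end, fix $u_0 \in D(\mathcal{F})$ and, for $\lambda > 0$, consider the functional $v \mapsto \mathcal{F}(v) + \tfrac{1}{2\lambda}\|v - u_0\|_H^2$. Since $\mathcal{F}$ is proper, convex and lower semicontinuous, it is bounded below by an affine function, i.e. there exist $p \in H$ and $c \in \R$ with $\mathcal{F}(v) \geq \langle p, v \rangle + c$ for all $v \in H$; hence the functional above is coercive, strictly convex and (weakly) lower semicontinuous, so by the direct method it admits a unique minimizer $u_\lambda \in H$. The key computation is to insert the competitor $(1-s)u_\lambda + s v$ for arbitrary $v \in H$ and $s \in (0,1)$: using convexity of $\mathcal{F}$, expanding $\|(1-s)u_\lambda + sv - u_0\|^2 = \|u_\lambda - u_0\|^2 + 2s\langle u_\lambda - u_0, v - u_\lambda\rangle + s^2\|v - u_\lambda\|^2$, cancelling, dividing by $s$ and letting $s \to 0^+$, one obtains $\mathcal{F}(u_\lambda) \leq \mathcal{F}(v) + \tfrac1\lambda \langle u_\lambda - u_0, v - u_\lambda \rangle$ for all $v$, which rewritten as $\mathcal{F}(u_\lambda + w) - \mathcal{F}(u_\lambda) \geq \langle \tfrac1\lambda(u_0 - u_\lambda), w\rangle$ says precisely that $\tfrac1\lambda(u_0 - u_\lambda) \in \partial\mathcal{F}(u_\lambda)$. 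In particular $u_\lambda \in D(\partial\mathcal{F})$.

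It remains to show $u_\lambda \to u_0$ as $\lambda \to 0^+$. Comparing the value of the functional at $u_\lambda$ with its value at $u_0$ gives $\mathcal{F}(u_\lambda) + \tfrac{1}{2\lambda}\|u_\lambda - u_0\|^2 \leq \mathcal{F}(u_0)$; inserting the affine lower bound for $\mathcal{F}(u_\lambda)$ and using $\langle p, u_\lambda \rangle \geq \langle p, u_0\rangle - \|p\|\,\|u_\lambda - u_0\|$ yields
\begin{equation*}
\tfrac{1}{2\lambda}\|u_\lambda - u_0\|^2 \leq \mathcal{F}(u_0) - c - \langle p, u_0 \rangle + \|p\|\,\|u_\lambda - u_0\|,
\end{equation*}
a quadratic inequality in $t := \|u_\lambda - u_0\|$ which forces $t \leq \lambda\|p\| + \sqrt{\lambda^2\|p\|^2 + 2\lambda\big(\mathcal{F}(u_0) - c - \langle p, u_0\rangle\big)} \to 0$ as $\lambda \to 0^+$. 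Hence $u_0 \in \overline{D(\partial\mathcal{F})}$, and taking closures over all $u_0 \in D(\mathcal{F})$ completes the proof. The main obstacle is the middle step: establishing existence and uniqueness of $u_\lambda$ in the possibly infinite-dimensional $H$ (which needs the affine minorant together with weak lower semicontinuity of convex lsc functionals) and deriving the Euler inclusion $\tfrac1\lambda(u_0 - u_\lambda) \in \partial\mathcal{F}(u_\lambda)$ without appealing to a subdifferential sum rule — this is the convex-combination computation indicated above, which I would carry out in full.
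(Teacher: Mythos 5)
Your proof is correct: the paper itself does not prove this statement but simply quotes it from Brezis's monograph (Proposition 2.11 there), and your argument --- the two definitional inclusions followed by the Moreau--Yosida/proximal construction $u_\lambda = \operatorname{argmin}\big(\mathcal{F}(\cdot) + \tfrac{1}{2\lambda}\|\cdot - u_0\|_H^2\big)$ with the convex-combination derivation of $\tfrac1\lambda(u_0 - u_\lambda) \in \partial\mathcal{F}(u_\lambda)$ and the quadratic estimate forcing $u_\lambda \to u_0$ --- is precisely the classical proof given in that reference.
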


\section{Characterisation of the subdifferential in the general case}\label{sec:general}

In this Section, we introduce a notion of weak solutions to gradient flows in metric measure spaces in a fairly general setting, assuming that the functional only depends on the differential of the function $u$. In particular, this setting covers the case when the functional only depends on $u$ through its minimal $p$-weak upper gradient, since in this case we have $|du|_* = |Du|$ $\nu$-a.e. As usual, we require some form of convexity, lower semicontinuity, and coercivity of the functional; let us briefly describe below the setting for this entire Section.

{\bf \flushleft Assumptions.} Let $p > 1$ and suppose that $\mathcal{F}: L^2(\mathbb{X},\nu) \rightarrow [0,+\infty]$ is a convex and lower semicontinuous functional given by the formula
\begin{equation}
\mathcal{F}(u) = \twopartdef{E(du)}{\mbox{if } u \in W^{1,p}(\mathbb{X},d,\nu);}{+ \infty}{\mbox{otherwise},}
\end{equation}
where $E: L^p(T^* \mathbb{X}) \rightarrow \mathbb{R}$ is a continuous and convex functional, which satisfies the following coercivity condition: there exists $C > 0$ such that for all $v \in L^p(T^* \mathbb{X})$
\begin{equation}\label{eq:generalboundfrombelow}
E(v) \geq C \int_{\mathbb{X}} |v|_*^p.
\end{equation}
In particular, it is nonnegative. 

Then, we can define weak solutions in terms of the following multivalued operator.

\begin{definition}
We say that $(u,v) \in \mathcal{A}$ if and only if $u,v \in L^2(\mathbb{X},\nu)$, $u \in W^{1,p}(\mathbb{X},d,\nu)$ and there exists a vector field $X \in \mathcal{D}^{p',2}(\mathbb{X})$ such that the following conditions hold:
\begin{equation}
-\mathrm{div}(X) = v \quad \mbox{in } \mathbb{X};
\end{equation}
\begin{equation}
X \in \partial E(du).
\end{equation}
\end{definition}

\begin{theorem}\label{thm:generalresult}
We have $\mathcal{A} = \partial \mathcal{F}$. Furthermore, its domain is dense in $L^2(\mathbb{X},\nu)$.
\end{theorem}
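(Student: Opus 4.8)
The plan is to prove the two inclusions $\mathcal{A} \subset \partial\mathcal{F}$ and $\partial\mathcal{F} \subset \mathcal{A}$ separately, and then deduce density of the domain from Minty's theorem together with Proposition \ref{prop:densityofthedomain}. For the inclusion $\mathcal{A} \subset \partial\mathcal{F}$, suppose $(u,v) \in \mathcal{A}$ with witnessing vector field $X \in \mathcal{D}^{p',2}(\mathbb{X})$, so that $-\mathrm{div}(X) = v$ and $X \in \partial E(du)$. I want to show $\mathcal{F}(u + w) - \mathcal{F}(u) \geq \langle v, w\rangle_{L^2(\mathbb{X},\nu)}$ for all $w \in L^2(\mathbb{X},\nu)$. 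This is trivial if $u + w \notin W^{1,p}(\mathbb{X},d,\nu)$, since then $\mathcal{F}(u+w) = +\infty$; so assume $w \in W^{1,p}(\mathbb{X},d,\nu) \cap L^2(\mathbb{X},\nu)$. Then by the definition of the subdifferential of $E$ in the module $L^p(T^*\mathbb{X})$,
\begin{equation*}
E(du + dw) - E(du) = E(d(u+w)) - E(du) \geq \langle X, dw\rangle = \int_{\mathbb{X}} dw(X) \, d\nu,
\end{equation*}
where I use linearity of the differential and that the pairing $\langle X, \cdot\rangle$ on $L^p(T^*\mathbb{X})$ is exactly integration of the duality function $dw(X) \in L^1(\mathbb{X},\nu)$. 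Since $X \in \mathcal{D}^{p',2}(\mathbb{X})$ with divergence $v$, and $w \in W^{1,p}(\mathbb{X},d,\nu) \cap L^2(\mathbb{X},\nu)$ is an admissible test function in the definition of $\mathcal{D}^{p',2}(\mathbb{X})$, the integration-by-parts identity gives $\int_{\mathbb{X}} dw(X)\,d\nu = -\int_{\mathbb{X}} \mathrm{div}(X)\, w\, d\nu = \int_{\mathbb{X}} v w\, d\nu$, which is exactly $\langle v, w\rangle$. Combining, $\mathcal{F}(u+w) - \mathcal{F}(u) \geq \langle v, w\rangle$, so $v \in \partial\mathcal{F}(u)$.

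For the reverse inclusion $\partial\mathcal{F} \subset \mathcal{A}$, I would use convex duality. Fix $(u,v) \in \partial\mathcal{F}$; in particular $u \in W^{1,p}(\mathbb{X},d,\nu)$. The condition $v \in \partial\mathcal{F}(u)$ means $u$ minimizes $w \mapsto \mathcal{F}(w) - \langle v, w\rangle$ over $L^2(\mathbb{X},\nu)$, equivalently $u$ minimizes $w \mapsto E(dw) + G(w)$ over $W^{1,p} \cap L^2$, where $G(w) = -\langle v, w\rangle$ (a continuous linear, hence convex lsc, functional on $L^2$). Setting $U = W^{1,p}(\mathbb{X},d,\nu) \cap L^2(\mathbb{X},\nu)$ with its natural norm, $V = L^p(T^*\mathbb{X})$, and $A = d : U \to V$ the (bounded, linear) differential operator, this is precisely the primal problem \eqref{eq:primal}. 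Theorem \ref{thm:fenchelrockafellartheorem} applies since $E$ is continuous everywhere on $V$ (so the qualification condition holds at any $u_0 \in U$), yielding a dual solution $v^* \in V^* = L^{p'}(T\mathbb{X})$ — this is the candidate vector field $X$ — and the two extremality conditions. The first extremality condition, $E(du) + E^*(-X) = \langle -X, du\rangle$, is equivalent to $-(-X) = X \in \partial E(du)$ by the Fenchel equality characterization of the subdifferential; wait, more carefully, the condition $E(Au) + E^*(-v^*) = \langle -v^*, Au\rangle$ says $-v^* \in \partial E(Au)$, so I should identify $X := -v^*$, giving $X \in \partial E(du)$. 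The second extremality condition, $G(u) + G^*(A^* X) = \langle u, A^* X\rangle$, combined with the explicit form $G(w) = -\langle v, w\rangle$ (whose conjugate $G^*$ is $0$ at $-v$ and $+\infty$ elsewhere, when restricted to $U^*$ appropriately) forces $A^* X = -v$ in $U^*$; unwinding the definition of the dual operator $A^* = d^*$, this says exactly that $-\int_{\mathbb{X}} dw(X)\, d\nu = -\langle v, w\rangle$ for all $w \in U$, i.e. $X \in \mathcal{D}^{p',2}(\mathbb{X})$ with $-\mathrm{div}(X) = v$. Hence $(u,v) \in \mathcal{A}$.

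Finally, for density of the domain: by the two inclusions $\mathcal{A} = \partial\mathcal{F}$, and $\mathcal{F}$ is proper (e.g. $0 \in D(\mathcal{F})$ since constants lie in $W^{1,p}$ when $\nu$ is finite on bounded sets — or one restricts attention to a suitable setting), convex, and lower semicontinuous on $H = L^2(\mathbb{X},\nu)$. By Brezis' theory (or directly Minty's Theorem \ref{thm:minty}), $\partial\mathcal{F}$ is maximal monotone, and Proposition \ref{prop:densityofthedomain} gives $\overline{D(\partial\mathcal{F})} = \overline{D(\mathcal{F})}$; since $D(\mathcal{F}) \supset W^{1,p}(\mathbb{X},d,\nu) \cap L^2(\mathbb{X},\nu)$ contains a dense subset of $L^2(\mathbb{X},\nu)$ (Lipschitz functions with bounded support, which are dense in $L^2$ and lie in $W^{1,p}$), we conclude $\overline{D(\partial\mathcal{F})} = L^2(\mathbb{X},\nu)$.

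I expect the main obstacle to be the careful bookkeeping in the duality step: matching the abstract spaces $U, V$ and the operator $A = d$ to the Fenchel–Rockafellar setup, verifying that the dual space $U^*$ and the adjoint $d^*$ correctly encode the divergence constraint with the right integrability ($r = 2$), and checking that $G^*$ has the claimed form on $U^*$ rather than on all of $L^2$ — the subtlety being that $U$ carries the $W^{1,p}$-norm, not the $L^2$-norm, so one must argue that the pairing $\langle v, w\rangle_{L^2}$ still makes sense and that the extremality condition translates faithfully into the definition of $\mathcal{D}^{p',2}(\mathbb{X})$. The density of Lipschitz functions (or of $W^{1,p}\cap L^2$) in $L^2$ also deserves a word, since without doubling/Poincaré one should use truncations and the structure of $\nu$ being finite on bounded sets.
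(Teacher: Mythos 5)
Your proposal is correct, and it reaches the identification $\mathcal{A}=\partial\mathcal{F}$ by a genuinely different route for the hard inclusion. Your Step for $\mathcal{A}\subset\partial\mathcal{F}$ and your density argument coincide with the paper's. For the converse, the paper does \emph{not} prove $\partial\mathcal{F}\subset\mathcal{A}$ directly: it shows that $\mathcal{A}$ satisfies the range condition $R(I+\mathcal{A})=L^2(\mathbb{X},\nu)$ by applying Fenchel--Rockafellar to the resolvent problem, i.e.\ with $G(w)=\frac12\int_{\mathbb{X}}w^2\,d\nu-\int_{\mathbb{X}}wg\,d\nu$, and then invokes Minty's theorem together with the maximal monotonicity of $\partial\mathcal{F}$ to upgrade the monotone inclusion $\mathcal{A}\subset\partial\mathcal{F}$ to an equality. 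You instead fix $(u,v)\in\partial\mathcal{F}$, observe that $u$ minimises $w\mapsto E(dw)-\langle v,w\rangle$ over $U=W^{1,p}(\mathbb{X},d,\nu)\cap L^2(\mathbb{X},\nu)$, and apply Fenchel--Rockafellar with the linear perturbation $G(w)=-\langle v,w\rangle$; the extremality conditions then hand you the vector field $X=-\overline{v}^*$ with $X\in\partial E(du)$ and $-\mathrm{div}(X)=v$ directly. The duality bookkeeping is identical in both versions (same $U$, $V$, $A=d$, same identification of $A^*$ with $-\mathrm{div}$ and of the domain of representable adjoints with $\mathcal{D}^{p',2}(\mathbb{X})$), and your sign computation for $G^*$ being the indicator of $\{-\iota^*v\}$ in $U^*$ is right. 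What your route buys is that it bypasses Minty entirely and needs no coercivity of the perturbed functional (you already have a primal minimiser by hypothesis, with finite value since $D(\partial\mathcal{F})\subset D(\mathcal{F})$, so the dual problem is nondegenerate); what the paper's route buys is the explicit solvability of the resolvent equation $g\in u+\mathcal{A}(u)$, which is the form actually used in the semigroup machinery --- though of course this follows a posteriori from $\mathcal{A}=\partial\mathcal{F}$ either way. The only point you should make fully explicit is the one you already flag: Lipschitz functions with bounded support lie in $W^{1,p}(\mathbb{X},d,\nu)\cap L^2(\mathbb{X},\nu)$ because $\nu$ is finite on bounded sets (take the local Lipschitz constant, supported in a bounded set, as an upper gradient), and they are dense in $L^2(\mathbb{X},\nu)$; this is also implicit in the paper's final display.
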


\begin{proof}
{\bf Step 1.} First, let us see that $\mathcal{A} \subset \partial \mathcal{F}$. Let $(u,v) \in \mathcal{A}$. Given $w \in W^{1,p}(\mathbb{X}, d, \nu)$, we calculate
\begin{align}
\int_{\mathbb{X}} v&(w-u) \, d \nu = - \int_{\mathbb{X}}  \mathrm{div}(X)(w - u) \, d\nu = \int_{\mathbb{X}} d(w-u)(X) \, d\nu \\
&= \int_{\mathbb{X}} dw(X) \, d\nu - \int_{\mathbb{X}} du(X) \, d\nu \leq E(dw) - E(du) = \mathcal{F}(w) - \mathcal{F}(u).
\end{align}
Whenever $w \notin W^{1,p}(\mathbb{X},d,\nu)$, then $\mathcal{F}(w) = +\infty$ and this inequality also holds. Consequently, $(u,v) \in \partial \mathcal{F}$. Notice that since $\mathcal{A} \subset \partial \mathcal{F}$, the operator $\mathcal{A}$ is monotone.
{\flushleft \bf Step 2.} Since $\mathcal{F}$ is convex and lower semicontinuous, the operator $\partial \mathcal{F}$ is maximal monotone. So, if we show that  $\mathcal{A}$ satisfies the range condition, by the Minty theorem (Theorem \ref{thm:minty}) we would also have that the operator $\mathcal{A}$ is maximal monotone, and consequently $\partial \mathcal{F} = \mathcal{A}$. In order to finish the proof, we need to show $\mathcal{A}$ satisfies the range condition, i.e.
\begin{equation}
\hbox{Given} \ g \in L^2(\mathbb{X}, \nu) \ \exists \, u \in D(\mathcal{A}) \ \hbox{such that} \ \  g \in u + \mathcal{A}(u).
\end{equation}
We rewrite it as
$$ g \in u + \mathcal{A}(u) \iff (u, g-u) \in \mathcal{A},$$
so we need to show that there exists a vector field $X \in \mathcal{D}^{p',2}(\mathbb{X})$ such that the following conditions hold:
\begin{equation}
-\mathrm{div}(X) = g-u \quad \hbox{in } \mathbb{X};
\end{equation}
\begin{equation}
X \in \partial E(du).
\end{equation}
We will prove the claim using the Fenchel-Rockafellar duality theorem (Theorem \ref{thm:fenchelrockafellartheorem}). Set
$$U = W^{1,p}(\mathbb{X},d,\nu) \cap L^2(\mathbb{X},\nu); \qquad V = L^p(T^{*} \mathbb{X})$$
and define the operator $A: U \rightarrow V$ by the formula
$$A(u) = du,$$
where $du$ is the differential of $u$ in the sense of Definition \ref{dfn:differential}. Hence, $A$ is a linear and continuous operator. Moreover, the dual spaces to $U$ and $V$ are
$$ U^* = (W^{1,p}(\mathbb{X},d,\nu) \cap L^2(\mathbb{X},\nu))^*; \qquad V^* =  L^{p'}(T\mathbb{X}).$$
The functional $E: L^p(T^{*} \mathbb{X}) \rightarrow \mathbb{R}$ is the one from the definition of $\mathcal{F}$. We also set $G: W^{1,p}(\mathbb{X},d,\nu) \cap L^2(\mathbb{X},\nu) \rightarrow \mathbb{R}$ by
$$G(u):= \frac12 \int_{\mathbb{X}} u^2 \, d\nu - \int_{\mathbb{X}} ug \, d\nu,$$
so $G$ is continuous and convex. By the Young inequality we have
\begin{equation}
G(u) \geq \frac12 \int_{\mathbb{X}} u^2 \, d\nu - \varepsilon \int_{\mathbb{X}} u^2 \, d\nu - C(\varepsilon) \int_{\mathbb{X}} g^2 \, d\nu,
\end{equation}
so if we choose $\varepsilon < \frac12$, we get that $G$ is bounded from below.
 
{\flushleft \bf Step 3.} We now observe that for any $v^* \in L^{p'}(T\mathbb{X})$ in the domain of $A^*$, whenever $u \in W^{1,p}(\mathbb{X},d,\nu) \cap L^2(\mathbb{X},\nu)$ we have
$$\int_{\mathbb{X}} u \, (A^* v^*) \, d\nu = \langle u, A^* v^* \rangle  = \langle v^*, Au \rangle = \int_{\mathbb{X}} du(v^*) \, d\nu,$$
so the definition of the divergence of $v^*$ is satisfied with
\begin{equation}
\mathrm{div}(v^*) = - A^* v^*.
\end{equation}
In particular, $\mathrm{div}(v^*) \in L^2(\mathbb{X},\nu)$. In other words, the domain of $A^*$ is $\mathcal{D}^{p',2}(\mathbb{X}).$

{\flushleft \bf Step 4.} Consider the energy functional $\mathcal{G}: L^2(\mathbb{X}, \nu) \rightarrow (-\infty, + \infty]$ defined by
\begin{equation}
\mathcal{G}(u) := \mathcal{F}(u) + G(u).
\end{equation}
Clearly, $\mathcal{G}$ is convex and lower semicontinuous. It is also coercive, because whenever $\mathcal{G}(u) \leq M$, we have
\begin{equation}
\frac12 \int_{\mathbb{X}} u^2 \, d\nu + \mathcal{F}(u) \leq M + \int_{\mathbb{X}} u g \, d\nu,
\end{equation}
and by nonnegativity of $\mathcal{F}$ and the Young inequality for $\varepsilon < \frac12$ we get
\begin{equation}
\bigg(\frac12 - \varepsilon \bigg) \int_{\mathbb{X}} u^2 \, d\nu \leq M + C(\varepsilon) \int_{\mathbb{X}} g^2 \, d\nu,
\end{equation}
so the norm of $u$ in $L^2(\mathbb{X},\nu)$ is bounded. Therefore, the minimization problem
$$\min_{u \in L^2(\mathbb{X}, \nu)} \mathcal{G}(u)$$
admits an optimal solution $\overline{u} \in L^2(\mathbb{X},\nu)$; since $E$ satisfies the bound \eqref{eq:generalboundfrombelow}, we have that $\overline{u} \in W^{1,p}(\mathbb{X},d,\nu)$. Notice that we may write the minimisation problem as
\begin{equation}\label{eq:generalprimal}
\min_{u \in L^2(\mathbb{X}, \nu)} \mathcal{G}(u) = \inf_{u \in W^{1,p}(\mathbb{X},d,\nu) \cap L^2(\mathbb{X},\nu)} \bigg\{ E(Au) + G(u) \bigg\}.
\end{equation}
Hence, its dual problem is
\begin{equation}\label{eq:generaldual}
\sup_{v^* \in L^{p'}(T\mathbb{X})} \bigg\{  - E^*(-v^*) - G^*(A^* v^*) \bigg\}.
\end{equation}
For $u_0 \equiv 0$ we have $E(Au_0) = 0 < \infty$, $G(u_0) = 0 < \infty$ and $E$ is continuous at $0$. By the Fenchel-Rockafellar duality theorem (Theorem \ref{thm:fenchelrockafellartheorem}), there is no duality gap (i.e. $\inf \eqref{eq:generalprimal} = \sup \eqref{eq:generaldual})$ and the dual problem \eqref{eq:generaldual} admits at least one solution $\overline{v}$. Now, since both the primal and dual problem admit solutions, we now use the extremality conditions between the solutions $\overline{u}$ of the primal problem and $\overline{v}$ of the dual problem given in Theorem \ref{thm:fenchelrockafellartheorem}; these are
\begin{equation}
E(A\overline{u}) + E^*(-\overline{v}^*) = \langle -\overline{v}^*, A\overline{u} \rangle
\end{equation}
and
\begin{equation}
G(\overline{u}) + G^*(A^* \overline{v}^*) = \langle \overline{u}, A^* \overline{v}^* \rangle.
\end{equation}
We rewrite them respectively as 
\begin{equation}\label{eq:generalfirstcondition}
A^* \overline{v}^* \in \partial G(\overline{u})
\end{equation}
and
\begin{equation}\label{eq:generalsecondcondition}
-\overline{v}^* \in \partial E(A \overline{u}).
\end{equation}
Since $\partial G(\overline{u}) = \{ \overline{u} - g \}$, the condition \eqref{eq:generalfirstcondition} implies
\begin{equation}
-\mathrm{div}(\overline{v}^*) = \overline{u} - g,
\end{equation}
and condition \eqref{eq:generalsecondcondition} implies the range condition for $X = -\overline{v}^*$.

Hence, the operator $\mathcal{A}$ satisfies the range condition, so by the Minty theorem (Theorem \ref{thm:minty}) it is maximal monotone; therefore, $\mathcal{A} = \partial \mathcal{F}$. By Proposition \ref{prop:densityofthedomain}, we have
$$ D(\partial \mathcal{F}) \subset  D(\mathcal{F}) =  W^{1,p}(\mathbb{X},d,\nu) \cap L^2(\mathbb{X},\nu) \subset \overline{D(\mathcal{F})}^{L^2(\mathbb{X}, \nu)} \subset \overline{D(\partial \mathcal{F})}^{L^2(\mathbb{X}, \nu)},$$
from which follows the density of the domain.
\end{proof}

The above Theorem enables us to give a notion of weak solutions to the abstract Cauchy problem associated to the gradient flow of the functional $\mathcal{F}$, i.e.
\begin{equation}\label{eq:generalevolutionequation}\left\{ \begin{array}{ll} - \frac{d}{dt} u(t) \in \partial \mathcal{F}(u(t)), \quad t \in [0,T] \\[5pt] u(0) = u_0. \end{array}\right.
\end{equation}

\begin{definition}\label{dfn:generalweaksolutions}
{\rm  Given $u_0 \in L^2(\mathbb{X},\nu)$, we say that $u$ is a {\it weak solution} of the Cauchy problem \eqref{eq:generalevolutionequation}, if $u \in C([0,T];L^2(\mathbb{X},\nu)) \cap W_{\rm loc}^{1,2}(0, T; L^2(\mathbb{X},\nu))$, $u(0, \cdot) = u_0$, and for almost all $t \in (0,T)$
\begin{equation}
-u_t(t, \cdot) \in \partial\mathcal{F}(u(t, \cdot)).
\end{equation}
In other words, if $u(t) \in W^{1,p}(\mathbb{X}, d, \nu)$ and there exist vector fields  $X(t) \in  \mathcal{D}^{p',2}(\mathbb{X})$  such that for almost all $t \in [0,T]$ the following conditions hold:
$$ u_t(t, \cdot) = \mathrm{div}(X(t)) \quad \hbox{in } \mathbb{X}; $$
\begin{equation*}
X(t) \in \partial E(du(t)).
\end{equation*}
}
\end{definition}

As a consequence of the characterisation given in Theorem \ref{thm:generalresult} and the Brezis-Komura theorem (Theorem \ref{thm:breziskomura}), we have the following existence and uniqueness result.

\begin{theorem}\label{thm:generalexistence}
For any $u_0 \in L^2(\mathbb{X}, \nu)$ and all $T > 0$ there exists a unique weak solution $u(t)$ of the Cauchy problem \eqref{eq:generalevolutionequation} with $u(0) =u_0$.
\end{theorem}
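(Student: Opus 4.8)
The plan is to derive this as a direct consequence of the two main ingredients established earlier in the excerpt: the identification $\mathcal{A} = \partial\mathcal{F}$ with $\overline{D(\partial\mathcal{F})}^{L^2(\mathbb{X},\nu)} = L^2(\mathbb{X},\nu)$ from Theorem \ref{thm:generalresult}, and the Brezis-Komura theorem (Theorem \ref{thm:breziskomura}). First I would recall that $\mathcal{F}$ is by assumption proper (it vanishes at $u \equiv 0$), convex, and lower semicontinuous on the Hilbert space $H = L^2(\mathbb{X},\nu)$, so Theorem \ref{thm:breziskomura} applies verbatim: for any $u_0 \in \overline{D(\mathcal{F})}^{L^2}$ there is a unique strong solution $u \in C([0,T];L^2(\mathbb{X},\nu)) \cap W^{1,2}_{\rm loc}(0,T;L^2(\mathbb{X},\nu))$ of the abstract Cauchy problem \eqref{eq:abstractcauchy}, which is precisely \eqref{eq:generalevolutionequation}. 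The point that upgrades this to "any $u_0 \in L^2(\mathbb{X},\nu)$" is exactly the density of the domain: by Theorem \ref{thm:generalresult} (or equivalently the chain of inclusions via Proposition \ref{prop:densityofthedomain}) we have $\overline{D(\mathcal{F})}^{L^2(\mathbb{X},\nu)} = L^2(\mathbb{X},\nu)$, since $\overline{D(\partial\mathcal{F})}^{L^2} = L^2(\mathbb{X},\nu)$ and $D(\partial\mathcal{F}) \subset D(\mathcal{F}) \subset \overline{D(\mathcal{F})}^{L^2}$. So no restriction on the initial datum is needed.

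Next I would reconcile the two notions of solution. A strong solution $u$ of \eqref{eq:abstractcauchy} satisfies $u(t) \in D(\partial\mathcal{F})$ for a.e. $t$ and $-\tfrac{du}{dt}(t) \in \partial\mathcal{F}(u(t))$ a.e.; by the identification $\partial\mathcal{F} = \mathcal{A}$ from Theorem \ref{thm:generalresult}, membership $u(t) \in D(\partial\mathcal{F}) = D(\mathcal{A})$ means $u(t) \in W^{1,p}(\mathbb{X},d,\nu)$ and $-u_t(t) \in \mathcal{A}(u(t))$ unpacks to the existence of a vector field $X(t) \in \mathcal{D}^{p',2}(\mathbb{X})$ with $u_t(t) = \mathrm{div}(X(t))$ and $X(t) \in \partial E(du(t))$. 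This is exactly Definition \ref{dfn:generalweaksolutions}, so strong solutions of \eqref{eq:abstractcauchy} and weak solutions of \eqref{eq:generalevolutionequation} coincide; regularity ($C([0,T];L^2) \cap W^{1,2}_{\rm loc}$) and the initial condition $u(0) = u_0$ transfer directly. Uniqueness follows from uniqueness of the strong solution in Theorem \ref{thm:breziskomura} (equivalently from the $T$-contraction property of the semigroup $S(t)$ applied with the same initial datum).

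There is essentially no serious obstacle here; the statement is a packaging of the preceding two theorems. The only point that requires a word of care is the measurable-selection aspect implicit in Definition \ref{dfn:generalweaksolutions}: the strong solution gives, for a.e. $t$, \emph{some} element of $\partial\mathcal{F}(u(t)) = \mathcal{A}(u(t))$, and one should note that choosing the element of minimal norm $\partial^-\mathcal{F}(u(t)) = \tfrac{du}{dt}(t)$ (which is measurable in $t$ as the a.e.-derivative of $u \in W^{1,2}_{\rm loc}$) and then, for each such $t$, picking a witnessing vector field $X(t) \in \mathcal{D}^{p',2}(\mathbb{X})$ with $-\mathrm{div}(X(t)) = -u_t(t)$ and $X(t) \in \partial E(du(t))$, yields a family $\{X(t)\}$ as required — the definition only demands existence of such vector fields pointwise a.e., with no joint measurability imposed, so nothing further is needed. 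I would therefore write the proof as: (i) invoke Theorem \ref{thm:breziskomura} on $H = L^2(\mathbb{X},\nu)$ with the functional $\mathcal{F}$; (ii) use $\overline{D(\mathcal{F})}^{L^2} = L^2(\mathbb{X},\nu)$ from Theorem \ref{thm:generalresult}/Proposition \ref{prop:densityofthedomain} to cover arbitrary $u_0$; (iii) translate "strong solution of \eqref{eq:abstractcauchy}" into "weak solution of \eqref{eq:generalevolutionequation}" via $\partial\mathcal{F} = \mathcal{A}$ and Definition \ref{dfn:generalweaksolutions}; (iv) conclude uniqueness from the uniqueness clause of Theorem \ref{thm:breziskomura}.
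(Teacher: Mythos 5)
Your proposal is correct and follows exactly the route the paper takes: Theorem \ref{thm:generalexistence} is stated there as an immediate consequence of the identification $\mathcal{A}=\partial\mathcal{F}$ and the density of $D(\partial\mathcal{F})$ from Theorem \ref{thm:generalresult} combined with the Brezis--Komura theorem (Theorem \ref{thm:breziskomura}). Your additional remarks on translating strong solutions into weak solutions and on the pointwise-in-$t$ selection of the vector fields $X(t)$ merely make explicit what the paper leaves implicit.
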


By construction, the solution automatically enjoys all the properties listed below the statement of Theorem \ref{thm:breziskomura} (for a further discussion we refer to \cite{Brezis}). As a first application of the Theorem, let us now see that the notion of solutions given here agrees with the one given for the $p$-Laplacian evolution equation in \cite{GM2021}.

\begin{example}\label{ex:plaplace}
Observe that we can write the Cheeger energy $\mathsf{Ch}_p: L^2(\mathbb{X},\nu) \rightarrow [0,+\infty]$, which is given by the formula
\begin{equation}
\mathsf{Ch}_p(u) = \twopartdef{\displaystyle \frac1p \int_{\mathbb{X}} |Du|^p \, d\nu}{\mbox{if } u \in W^{1,p}(\mathbb{X},d,\nu);}{\displaystyle + \infty}{\mbox{otherwise,}}
\end{equation}
in the following way
\begin{equation}
\mathsf{Ch}_p(u) = \twopartdef{E_p(du)}{\mbox{if } u \in W^{1,p}(\mathbb{X},d,\nu);}{+\infty}{\mbox{otherwise,}}
\end{equation}
where $E_p: L^p(T^* \mathbb{X}) \rightarrow \mathbb{R}$ given by
\begin{equation}
E_p(v) = \frac1p \int_{\mathbb{X}} |v|_*^p \, d\nu
\end{equation}
is convex, continuous, nonnegative and coercive. Therefore, we may apply Theorem \ref{thm:generalexistence} to obtain existence of weak solutions in the sense of Definition \ref{dfn:generalweaksolutions}. However, in this case we may compute the subdifferential of $E_p$ in the following way: first, observe that its convex conjugate $E_p^*: L^{p'}(T\mathbb{X}) \rightarrow \mathbb{R}$ is
\begin{equation}
E_p^*(v^*) = \frac{1}{p'} |v^*|^{p'} \, d\nu.
\end{equation}
Since the condition $X \in \partial E_p(v)$ is equivalent to
\begin{equation}
E_p(v) + E_p^*(X) = \int_{\mathbb{X}} v(X) \, d\nu,
\end{equation} 
or equivalently
\begin{equation*}
\frac1p \int_\mathbb{X} |v|_*^p \, d\nu + \frac{1}{p'} \int_\mathbb{X} |X|^{p'} \, d\nu = \int_{\mathbb{X}} v(X) \, d\nu,
\end{equation*}
we get that $X \in \partial E_p(v)$ if and only if $v(X) = |X|^{p'} = |v|_*^p$ $\nu$-a.e. in $\mathbb{X}$. Therefore, the operator $\mathcal{A}$ coincides with the operator $\mathcal{A}_p$ given in \cite{GM2021}. Moreover, the definition of the weak solution, i.e. that $u$ is sufficiently regular and there exists a vector field $X \in \mathcal{D}^{p',2}(\mathbb{X})$ such that
\begin{equation}
u_t(t) = \mathrm{div}(X(t))
\end{equation}
and
\begin{equation}
du(t)(X(t)) = |X(t)|^{p'} = |du|_*^p \quad \nu-\mbox{a.e. in } \mathbb{X}
\end{equation}
coincides with the one given in \cite{GM2021}.
\end{example}

On a final note, we observe that the notion of weak solutions introduced in this Section agrees with the notion of variational solutions, which goes back to the work of Lichnewsky and Temam \cite{LT} and was formally introduced by B\"ogelein, Duzaar and Marcellini \cite{BDM} to study evolution equations with inhomogeneous growth.

\begin{proposition}
Let $u$ be a weak solution to the Cauchy problem \eqref{eq:generalevolutionequation}. Then, for any $v \in L^1(0,T; W^{1,p}(\mathbb{X},d,\nu)) \cap C([0,T]; L^2(\mathbb{X},\nu))$ with $\partial_t v \in L^2(\mathbb{X} \times [0,T], \nu \otimes \mathcal{L}^1)$ we have
\begin{align}
\int_0^T \int_{\mathbb{X}} \partial_t v(v-u) \, d\nu \, dt + \int_0^T &\mathcal{F}(v(t)) \, dt - \int_0^T \mathcal{F}(u(t)) \, dt \label{eq:variationalsolution} \\
&\geq \frac12 \Vert (v-u)(T) \Vert^2_{L^2(\mathbb{X},\nu)} - \frac12 \Vert v(0)-u_0 \Vert^2_{L^2(\mathbb{X},\nu)},
\end{align}
i.e., $u$ is a variational solution to the Cauchy problem \eqref{eq:generalevolutionequation}.
\end{proposition}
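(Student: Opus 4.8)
The plan is to exploit the fact that weak solutions are strong solutions in the sense of the Brezis--Komura theory, so that $-u_t(t) \in \partial\mathcal{F}(u(t))$ for a.e.\ $t$, together with the elementary observation that the inequality to be proven is essentially the subdifferential inequality integrated in time, with one extra term coming from the time derivative. First I would fix a competitor $v$ as in the statement and, for a.e.\ $t \in (0,T)$, apply the definition of the subdifferential of $\mathcal{F}$ at $u(t)$ with increment $v(t) - u(t)$: since $-u_t(t) \in \partial\mathcal{F}(u(t))$, we get
\begin{equation*}
\mathcal{F}(v(t)) - \mathcal{F}(u(t)) \geq \int_{\mathbb{X}} (-u_t(t))(v(t) - u(t)) \, d\nu.
\end{equation*}
Note this is legitimate because $v(t) \in W^{1,p}(\mathbb{X},d,\nu)$ for a.e.\ $t$ (from $v \in L^1(0,T;W^{1,p})$), so $\mathcal{F}(v(t))$ is finite a.e., and $\mathcal{F}(u(t))$ is finite for a.e.\ $t > 0$ since $u(t) \in D(\partial\mathcal{F}) \subset D(\mathcal{F})$. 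Integrating this over $t \in (0,T)$ gives
\begin{equation*}
\int_0^T \mathcal{F}(v(t)) \, dt - \int_0^T \mathcal{F}(u(t)) \, dt \geq -\int_0^T \int_{\mathbb{X}} u_t (v - u) \, d\nu \, dt.
\end{equation*}

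Next I would rewrite the right-hand side. Adding $\int_0^T\int_{\mathbb{X}} \partial_t v \,(v-u)\,d\nu\,dt$ to both sides, it suffices to show that
\begin{equation*}
\int_0^T \int_{\mathbb{X}} (\partial_t v - u_t)(v - u) \, d\nu \, dt \geq \frac12 \|(v-u)(T)\|^2_{L^2(\mathbb{X},\nu)} - \frac12 \|v(0) - u_0\|^2_{L^2(\mathbb{X},\nu)}.
\end{equation*}
But the integrand is $\tfrac{1}{2}\tfrac{d}{dt}\|(v-u)(t)\|^2_{L^2(\mathbb{X},\nu)}$ pointwise in $t$, since both $u$ and $v$ lie in $W^{1,2}_{\mathrm{loc}}(0,T;L^2(\mathbb{X},\nu))$ (for $u$ this is part of being a weak solution; for $v$ it follows from $\partial_t v \in L^2(\mathbb{X}\times[0,T])$ combined with $v \in C([0,T];L^2)$), and the map $t \mapsto \tfrac12\|(v-u)(t)\|^2_{L^2}$ is then absolutely continuous on $[0,T]$ with derivative $\langle \partial_t v - u_t, v - u\rangle_{L^2}$ a.e. Hence the time integral equals $\tfrac12\|(v-u)(T)\|^2 - \tfrac12\|(v-u)(0)\|^2$, and $(v-u)(0) = v(0) - u_0$ by the initial conditions. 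This yields exactly the claimed inequality, in fact with equality in this last step.

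The main subtlety — and the step I would be most careful about — is the justification that $t \mapsto \tfrac12\|(v-u)(t)\|^2_{L^2}$ is absolutely continuous up to the endpoints $0$ and $T$ and that its derivative is the expected pairing. The function $u$ is only in $W^{1,2}_{\mathrm{loc}}(0,T;L^2)$, not necessarily in $W^{1,2}(0,T;L^2)$, so a priori the chain rule for the squared norm is only valid on compact subintervals $[a,b] \subset (0,T)$. The clean way around this is: $u \in C([0,T];L^2)$ and $v \in C([0,T];L^2)$, so $t \mapsto \|(v-u)(t)\|^2$ is continuous on $[0,T]$; it is locally absolutely continuous on $(0,T)$ with derivative $\langle \partial_t v - u_t, v-u\rangle$; and this derivative is integrable on $(0,T)$ because $\partial_t v \in L^2(\mathbb{X}\times[0,T])$, $v - u \in L^\infty(0,T;L^2)$ (both are continuous into $L^2$ on the compact interval $[0,T]$), and $u_t \in L^2_{\mathrm{loc}}(0,T;L^2)$ — here one uses the stronger fact recorded after Theorem~\ref{thm:breziskomura} that $\frac{d}{dt}\mathcal{F}(u(t)) = -\|u_t(t)\|^2_{L^2}$ with $t \mapsto \mathcal{F}(u(t))$ locally Lipschitz and decreasing, so $\int_\delta^T \|u_t\|^2 \, dt \leq \mathcal{F}(u(\delta)) \leq \mathcal{F}(v(\delta)) < \infty$ for suitable small $\delta$, giving $u_t \in L^1(0,T;L^2)$ after a Cauchy--Schwarz estimate — hence a continuous function on $[0,T]$ whose derivative is integrable on $(0,T)$ is absolutely continuous on $[0,T]$, and the fundamental theorem of calculus applies over the full interval. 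For safety one may instead argue by first proving the inequality on $[\delta, T-\delta]$, then letting $\delta \to 0^+$ using the continuity of $v - u$ into $L^2$ and dominated convergence for the $\mathcal{F}$ and $\partial_t v$ integrals.
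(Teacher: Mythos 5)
Your core argument is the same as the paper's: the pointwise-in-time subdifferential inequality for $\mathcal{F}$ at $u(t)$ with increment $v(t)-u(t)$, combined with the identity $\int_a^b\langle \partial_t v-\partial_t u,\,v-u\rangle\,dt=\tfrac12\|(v-u)(b)\|^2-\tfrac12\|(v-u)(a)\|^2$. (The paper writes the first ingredient through the vector field $X(t)$, i.e.\ $u_t=\mathrm{div}(X(t))$ and $X(t)\in\partial E(du(t))$, but after integrating by parts this is exactly your inequality $\mathcal{F}(v(t))-\mathcal{F}(u(t))\geq\langle -u_t,v-u\rangle$, so this is a cosmetic difference.) The genuine problem is in your preferred route, which integrates directly over $(0,T)$: the justification that $u_t\in L^1(0,T;L^2(\mathbb{X},\nu))$ is broken. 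You write $\int_\delta^T\|u_t\|^2\,dt\leq\mathcal{F}(u(\delta))\leq\mathcal{F}(v(\delta))$; the first inequality is fine, but the second is baseless --- there is no comparison between the energy of the solution and that of an arbitrary competitor at time $\delta$. More importantly, the statement only assumes $u_0\in L^2(\mathbb{X},\nu)$, not $u_0\in D(\mathcal{F})$, so $\mathcal{F}(u(\delta))$ may blow up as $\delta\to0^+$ and $u_t$ genuinely need not be Bochner integrable up to $t=0$ (already for the heat flow with $u_0\in L^2\setminus W^{1,2}$ one has $\|u_t(t)\|\sim t^{-1}$ near $t=0$). Consequently the ``for safety'' fallback you mention at the very end --- prove the inequality on $[\delta,T]$, where $u_t\in L^2(\delta,T;L^2(\mathbb{X},\nu))$, and then let $\delta\to0^+$ using the continuity of $v-u$ into $L^2(\mathbb{X},\nu)$ --- is not an optional safeguard but the actual proof, and it is precisely what the paper does. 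When taking that limit you should also record beforehand that $\int_0^T\mathcal{F}(u(t))\,dt<\infty$ (test the truncated inequality with a time-constant element of $D(\mathcal{F})$ and let $\delta\to0$), so that the term $-\int_\delta^T\mathcal{F}(u(t))\,dt$ converges and no $\infty-\infty$ ambiguity arises in case $\int_0^T\mathcal{F}(v(t))\,dt=+\infty$.
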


\begin{proof}
Given a test function $v$ as in the statement, we need to show that \eqref{eq:variationalsolution} holds. By taking as $v$ a function constant in time which lies in $D(\mathcal{F})$, we see that
\begin{equation}
\int_0^T \mathcal{F}(u(t)) \, dt < \infty.
\end{equation}
Since $\partial_t u \in L^2_{\rm loc}(0,T; L^2(\mathbb{X},\nu))$, let us fix some $\delta > 0$ and compute the term with the time derivative restricted to the interval $(\delta,T)$ using the characterisation of weak solutions. By the definition of the divergence, we have
\begin{align*}
\int_\delta^T \int_{\mathbb{X}} \partial_t u \, &(v - u) \, d \nu \, dt = \int_\delta^T \int_{\mathbb{X}} \mbox{div}(X(t)) (v - u) \, d \nu \, dt = \int_\delta^T \int_{\mathbb{X}} d(u - v)(X(t)) \, d\nu \, dt.
\end{align*}
Also,
$$\int_\delta^T \int_{\mathbb{X}} (\partial_t v - \partial_t u )(v - u) \, d \nu \, dt = \frac12 \Vert (v-u)(T) \Vert^2_{L^2(\mathbb{X},\nu)} - \frac12 \Vert v(\delta)- u(\delta) \Vert^2_{L^2(\mathbb{X},\nu)}. $$
We now add the two above equalities and get
\begin{align*}
\int_\delta^T &\int_{\mathbb{X}} \partial_t v \, (v - u) \, d \nu \, dt \\ &= \int_\delta^T \int_{\mathbb{X}} d(u - v)(X(t)) \, d\nu \, dt + \frac12 \Vert (v-u)(T) \Vert^2_{L^2(\mathbb{X},\nu)} - \frac12 \Vert v(\delta) - u(\delta) \Vert^2_{L^2(\mathbb{X},\nu)} \\
&\geq \int_\delta^T E(du(t)) \, dt - \int_\delta^T E(dv(t)) \, dt + \frac12 \Vert (v-u)(T) \Vert^2_{L^2(\mathbb{X},\nu)} - \frac12 \Vert v(\delta) - u(\delta) \Vert^2_{L^2(\mathbb{X},\nu)} \\
& = \int_\delta^T \mathcal{F}(u(t)) \, dt - \int_\delta^T \mathcal{F}(v(t)) \, dt + \frac12 \Vert (v-u)(T) \Vert^2_{L^2(\mathbb{X},\nu)} - \frac12 \Vert v(\delta) - u(\delta) \Vert^2_{L^2(\mathbb{X},\nu)}.
\end{align*}
The inequality follows from the condition $X(t) \in \partial E(du(t))$ in definition of weak solutions. We now pass to the limit $\delta \rightarrow 0$, and since $\partial_t v$ is square-integrable and the $L^2$ norms of the functions $u,v$ are continuous in time, we recover equation \eqref{eq:variationalsolution}.
\end{proof}

Typically, in the definition of variational solutions one assumes that $u_0 \in L^2(\mathbb{X},\nu) \cap W^{1,p}(\mathbb{X},d,\nu)$, or more generally that $u_0 \in D(\mathcal{F})$, so that by the Brezis-Komura theorem we have that $\partial_t u \in L^2(\mathbb{X} \times [0,T], \nu \otimes \mathcal{L}^1)$ and the proof is a bit simpler as we may perform the integration outright on the whole interval $[0,T]$.

\begin{remark}
In this section, we assumed that the functional $\mathcal{F}$ is lower semicontinuous with respect to convergence in $L^2(\mathbb{X},\nu)$. In general, lower semicontinuity of integral functionals in metric measure spaces is not yet completely understood; the most classical example of a lower semicontinuous functional is the Cheeger energy \cite{AGS1}. Some positive results for more general functionals can be found in \cite{HM1,HM2} and \cite{HKLL}.    
\end{remark}

\section{Application to a functional with inhomogeneous growth}\label{sec:inhomogeneous}

In this Section, we show how to apply the methods developed in the previous Section to a functional with inhomogeneous growth. Given $1 \leq q < p$, consider a functional $\mathcal{F}_{(q,p)}: L^2(\mathbb{X},\nu) \rightarrow [0, +\infty]$ given by the formula
\begin{equation}
\mathcal{F}_{(q,p)}(u) = \twopartdef{\displaystyle \frac1q \int_{\mathbb{X}} |Du|^q \, d\nu + \frac1p \int_{\mathbb{X}} |Du|^p \, d\nu}{\mbox{if } u \in W^{1,p}(\mathbb{X},d,\nu);}{\displaystyle +\infty}{\mbox{otherwise.}}
\end{equation}
We will characterize the subdifferential of $\mathcal{F}_{(q,p)}$ and apply this to define a notion of weak solutions to the gradient flow of said functional. To this end, let us first prove some technical results concerning the relation between cotangent and tangent modules for different exponents; since the functional $\mathcal{F}_{(q,p)}$ has two terms with different growth, until the end of the paper we will work under the following assumptions.

{\bf \flushleft Assumptions.} Fix $1 \leq q < p$. From now on, let us assume that $(\mathbb{X},d,\nu)$ is an RCD space, i.e. its Riemannian Ricci curvature is bounded from below. In the case $q = 1$, we additionally require that $\mathbb{X}$ is a proper space. These assumptions are added to remove the dependence of the first-order differential structure on the exponent (see Section \ref{subsec:diffstructure}). Furthermore, for simplicity we will assume that $\nu(\mathbb{X}) < \infty$.

\begin{lemma}
We have $\mathrm{PCM}_p \subset \mathrm{PCM}_q$.
\end{lemma}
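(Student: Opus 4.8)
The plan is to unwind the definitions. Recall that $\mathrm{PCM}_p$ consists of sequences $\{(f_i, A_i)\}_{i \in \N}$ where $(A_i)_{i\in\N}$ is a Borel partition of $\mathbb{X}$, each $f_i \in D^{1,p}(A_i)$, and $\sum_{i \in \N} \int_{A_i} |Df_i|^p \, d\nu < \infty$. To show $\mathrm{PCM}_p \subset \mathrm{PCM}_q$, I must check that every such sequence also qualifies as an element of $\mathrm{PCM}_q$, i.e.\ that $f_i \in D^{1,q}(A_i)$ with $\sum_i \int_{A_i} |Df_i|^q \, d\nu < \infty$. The first point is the inclusion $D^{1,p}(A_i) \subset D^{1,q}(A_i)$, which, since $q < p$, follows from the fact that an $L^p$ function on a set of finite measure lies in $L^q$: if $g$ is a $p$-weak upper gradient of $f_i$ in $L^p(A_i,\nu)$, then every $p$-exceptional curve family is also $q$-exceptional (since $\mathrm{Mod}_q \leq \mathrm{Mod}_p$ up to constants depending on $\nu(\mathbb{X}) < \infty$, by Hölder applied to admissible densities), so $g$ is also a $q$-weak upper gradient, and $g \in L^q(A_i,\nu)$ because $\nu(A_i) \le \nu(\mathbb{X}) < \infty$. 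Crucially, the RCD assumption (plus properness when $q=1$) guarantees via \cite{GH} that the minimal $q$-weak upper gradient equals the minimal $p$-weak upper gradient, so the notation $|Df_i|$ is unambiguous and the same object appears on both sides.

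Second, I need the summability $\sum_i \int_{A_i} |Df_i|^q \, d\nu < \infty$. Here I would apply Hölder's inequality on each piece $A_i$ with exponents $p/q$ and its conjugate: $\int_{A_i} |Df_i|^q \, d\nu \le \big(\int_{A_i} |Df_i|^p \, d\nu\big)^{q/p} \nu(A_i)^{1-q/p}$. Summing over $i$ and applying Hölder once more to the resulting series (with the same pair of exponents, treating $\big(\int_{A_i}|Df_i|^p\big)^{q/p}$ and $\nu(A_i)^{1-q/p}$ as the two factors), I get
\begin{equation*}
\sum_{i \in \N} \int_{A_i} |Df_i|^q \, d\nu \le \Big( \sum_{i \in \N} \int_{A_i} |Df_i|^p \, d\nu \Big)^{q/p} \Big( \sum_{i \in \N} \nu(A_i) \Big)^{1-q/p} = \Big( \sum_{i \in \N} \int_{A_i} |Df_i|^p \, d\nu \Big)^{q/p} \nu(\mathbb{X})^{1-q/p},
\end{equation*}
which is finite since the $p$-sum converges and $\nu(\mathbb{X}) < \infty$. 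This establishes the inclusion.

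I do not anticipate a serious obstacle here; the statement is essentially a bookkeeping lemma. The only subtle point worth stating explicitly is the exponent-independence of the minimal weak upper gradient, which is exactly why the RCD (and properness for $q=1$) hypotheses were imposed — without it, the symbol $|Df_i|$ would carry an implicit dependence on the exponent and the inclusion would not literally hold as an inclusion of sets of pairs. I would make sure to invoke that fact from Section \ref{subsec:diffstructure} at the start. The finiteness $\nu(\mathbb{X}) < \infty$ is used twice (once for $D^{1,p} \subset D^{1,q}$ pointwise, once for the summability), and both uses are routine.
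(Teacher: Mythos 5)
Your proof is correct, and it differs from the paper's in two small but real ways. For the summability $\sum_i \int_{A_i}|Df_i|^q\,d\nu<\infty$, the paper splits each $A_i$ into $\{|Df_i|>1\}$, where $|Df_i|^q\le|Df_i|^p$, and $\{|Df_i|\le 1\}$, where the integrand is bounded by $1$ and $\nu(\mathbb{X})<\infty$ finishes the job; you instead apply H\"older twice to get the quantitative bound $\sum_i\int_{A_i}|Df_i|^q\,d\nu \le \big(\sum_i\int_{A_i}|Df_i|^p\,d\nu\big)^{q/p}\,\nu(\mathbb{X})^{1-q/p}$. Both are standard routes to $L^p\hookrightarrow L^q$ on a finite measure space, but yours delivers the continuity of the inclusion (which the paper asserts separately, also via H\"older, only in its closing sentence) in the same stroke and with an explicit constant. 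Second, you explicitly verify a point the paper leaves implicit: membership in $\mathrm{PCM}_q$ requires $f_i\in D^{1,q}(A_i)$ in the first place, and you justify this through the comparison of moduli on a space of finite measure (so $p$-exceptional curve families are $q$-exceptional and a $p$-weak upper gradient in $L^p$ is a $q$-weak upper gradient in $L^q$) together with the exponent-independence of the minimal weak upper gradient coming from the standing RCD (and, for $q=1$, properness) assumption, without which the symbol $|Df_i|$ would carry a hidden dependence on the exponent. That is a routine but genuine omission in the paper's write-up that your version fills.
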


\begin{proof}
Let $\{ (f_i, A_i) \}_{i \in \N}$ be an element of the pre-cotangent module $\mathrm{PCM}_p$. In other words, $A_i$ are Borel sets which form a partition of $\mathbb{X}$, $f_i \in D^{1,p}(A_i)$ and
\begin{equation}
\sum_{i \in \N} \int_{A_i} |Df_i|^p \, d\nu < \infty.
\end{equation}
Let us see that $\{ (f_i, A_i) \}_{i \in \N} \in \mathrm{PCM}_q$. We compute
\begin{equation}
\infty > \sum_{i \in \N} \int_{A_i} |Df_i|^p \, d\nu \geq \sum_{i \in \N} \int_{A_i \cap \{ |Df_i| > 1 \} } |Df_i|^p \, d\nu \geq \sum_{i \in \N} \int_{A_i \cap \{ |Df_i| > 1 \} } |Df_i|^q \, d\nu.
\end{equation}
Thus, it follows from $\nu(\mathbb{X}) < \infty$ that
\begin{equation}
\sum_{i \in \N} \int_{A_i} |Df_i|^q \, d\nu = \sum_{i \in \N} \int_{A_i \cap \{ |Df_i| > 1 \} } |Df_i|^q \, d\nu + \sum_{i \in \N} \int_{A_i \cap \{ |Df_i| \leq 1 \} } |Df_i|^q \, d\nu < \infty,
\end{equation}
which proves the claim. By the H\"older inequality this inclusion is continuous.
\end{proof}

\begin{corollary}
We have $\sfrac{\mathrm{PCM}_p}{\sim} \subset \sfrac{\mathrm{PCM}_q}{\sim}.$
\end{corollary}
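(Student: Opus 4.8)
The plan is to deduce the corollary directly from the previous lemma, which asserts $\mathrm{PCM}_p \subset \mathrm{PCM}_q$ with a continuous inclusion. The only subtlety is that $\sfrac{\mathrm{PCM}_p}{\sim}$ and $\sfrac{\mathrm{PCM}_q}{\sim}$ are defined as quotients by equivalence relations $\sim_p$ and $\sim_q$ built out of the $p$-weak and $q$-weak upper gradients respectively, so one must check that the map induced on quotients by the set-theoretic inclusion is well-defined and injective. This is precisely where the RCD assumption (and properness when $q=1$) enters: under these hypotheses, the minimal $q$-weak upper gradient of a Sobolev function coincides with the minimal $p$-weak upper gradient, so the notion $|Df_i| = 0$ $\nu$-a.e. on $A_i \cap B_j$ is the same whether computed with exponent $p$ or exponent $q$.

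First I would recall that, by the Lemma, every representative $\{(f_i,A_i)\}_{i\in\N} \in \mathrm{PCM}_p$ is also a representative in $\mathrm{PCM}_q$, and moreover (by the equivalence of minimal weak upper gradients under the standing assumptions) the functions $f_i$ lie in $D^{1,q}(A_i)$ with the same minimal upper gradient $|Df_i|$. Then I would define the candidate inclusion $\iota: \sfrac{\mathrm{PCM}_p}{\sim} \to \sfrac{\mathrm{PCM}_q}{\sim}$ by sending the class of $\{(f_i,A_i)\}$ to its class in $\sfrac{\mathrm{PCM}_q}{\sim}$. To see this is well-defined, suppose $\{(f_i,A_i)\} \sim_p \{(g_j,B_j)\}$, i.e. $|D(f_i-g_j)| = 0$ $\nu$-a.e. on $A_i \cap B_j$ for all $i,j$; since the minimal $p$- and $q$-weak upper gradients agree, the same identity holds when $|D(f_i-g_j)|$ is interpreted as a $q$-weak upper gradient, so the two representatives are also $\sim_q$-equivalent. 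Injectivity follows by running the same argument in reverse on those representatives coming from $\mathrm{PCM}_p$: if two such representatives are identified in $\sfrac{\mathrm{PCM}_q}{\sim}$, then $|D(f_i-g_j)|_q = 0$ $\nu$-a.e. on the overlaps, hence $|D(f_i-g_j)|_p = 0$ there as well, so they are already identified in $\sfrac{\mathrm{PCM}_p}{\sim}$.

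The main (and essentially only) obstacle is the bookkeeping around the exponent-independence of minimal weak upper gradients: one must be careful that the equivalence relation defining the quotient really does transfer, and that the inclusion is continuous with respect to the respective $L^p$- and $L^q$-type norms — but the latter is exactly the final sentence of the Lemma's proof (Hölder's inequality together with $\nu(\mathbb{X}) < \infty$), so it carries over verbatim to the quotients. I would therefore phrase the proof as: the inclusion $\mathrm{PCM}_p \hookrightarrow \mathrm{PCM}_q$ from the Lemma respects the equivalence relations (because, by the RCD assumption, $\sim_p$ and $\sim_q$ coincide on pairs of elements of $\mathrm{PCM}_p$), hence descends to a continuous injection $\sfrac{\mathrm{PCM}_p}{\sim} \hookrightarrow \sfrac{\mathrm{PCM}_q}{\sim}$, which is the assertion.
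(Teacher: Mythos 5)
Your argument is correct and follows essentially the same route as the paper: the inclusion $\mathrm{PCM}_p \subset \mathrm{PCM}_q$ descends to the quotients because the equivalence relation is defined via the condition $|D(f_i - g_j)| = 0$ $\nu$-a.e.\ on overlaps, and under the standing RCD assumption the minimal weak upper gradient does not depend on the exponent. The paper states this in one line; your version merely spells out the well-definedness and injectivity of the induced map, which is a harmless elaboration of the same idea.
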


\begin{proof}
This immediately follows from the previous result and the definition of the equivalence relation, which only identifies sequences $\{ (f_i, A_i) \}$ such that $|Df_i|$ agree $\nu$-a.e. (after possibly taking a more detailed partition).
\end{proof}

\begin{proposition}
We have $L^p(T^* \mathbb{X}) \subset L^q(T^* \mathbb{X})$ and this embedding is continuous.
\end{proposition}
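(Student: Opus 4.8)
The plan is to promote the inclusion of the dense subspaces $\sfrac{\mathrm{PCM}_p}{\sim} \subset \sfrac{\mathrm{PCM}_q}{\sim}$ from the Corollary to an inclusion of their completions $L^p(T^*\mathbb{X})$ and $L^q(T^*\mathbb{X})$, and then to verify that the resulting linear map is injective, so that it is a genuine continuous embedding. First I would record the quantitative version of the inclusion. For $\omega = \{(f_i,A_i)\}_{i \in \N} \in \sfrac{\mathrm{PCM}_p}{\sim}$ the pointwise norm $|\omega|_*$, which equals $|Df_i|$ on $A_i$, is exactly the same $L^p(\mathbb{X},\nu)$-function as the pointwise norm of $\omega$ viewed in $\sfrac{\mathrm{PCM}_q}{\sim}$ --- here one uses that, by our standing assumptions, the minimal weak upper gradient $|Df_i|$ is independent of the exponent --- and only the power to which $|\omega|_*$ is integrated changes. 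Since $\nu(\mathbb{X}) < \infty$, Hölder's inequality with exponents $p/q$ and $p/(p-q)$ gives
\[
\|\omega\|_{L^q(T^*\mathbb{X})} = \Big( \int_{\mathbb{X}} |\omega|_*^q \, d\nu \Big)^{1/q} \leq \nu(\mathbb{X})^{\frac1q - \frac1p} \Big( \int_{\mathbb{X}} |\omega|_*^p \, d\nu \Big)^{1/p} = \nu(\mathbb{X})^{\frac1q - \frac1p} \, \|\omega\|_{L^p(T^*\mathbb{X})} .
\]
Hence the natural inclusion $\iota : \sfrac{\mathrm{PCM}_p}{\sim} \hookrightarrow \sfrac{\mathrm{PCM}_q}{\sim}$ is bounded for the respective module norms; as $\sfrac{\mathrm{PCM}_p}{\sim}$ is dense in $L^p(T^*\mathbb{X})$ by construction and $L^q(T^*\mathbb{X})$ is complete, $\iota$ extends uniquely to a bounded linear map $\bar\iota : L^p(T^*\mathbb{X}) \to L^q(T^*\mathbb{X})$ satisfying $\|\bar\iota(\omega)\|_{L^q(T^*\mathbb{X})} \leq \nu(\mathbb{X})^{1/q - 1/p} \|\omega\|_{L^p(T^*\mathbb{X})}$.

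The substantive point --- and the step I expect to be the main obstacle --- is injectivity of $\bar\iota$, since a priori passing to completions could collapse distinct $p$-cotangent vector fields. I would argue through the pointwise norm, using that in any $L^r$-normed module the map $v \mapsto |v|_*$ is $1$-Lipschitz into $L^r(\mathbb{X},\nu)$, because $\big| |v|_* - |w|_* \big| \leq |v - w|_*$ $\nu$-a.e.\ by subadditivity of the pointwise norm (see \cite{Gig}). Fix $\omega \in L^p(T^*\mathbb{X})$ and a sequence $\omega_n \in \sfrac{\mathrm{PCM}_p}{\sim}$ with $\omega_n \to \omega$ in $L^p(T^*\mathbb{X})$. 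Then $|\omega_n|_* \to |\omega|_*$ in $L^p(\mathbb{X},\nu)$, and therefore also in $L^q(\mathbb{X},\nu)$ since $\nu(\mathbb{X}) < \infty$. On the other hand $\iota(\omega_n) = \omega_n \to \bar\iota(\omega)$ in $L^q(T^*\mathbb{X})$, and the pointwise norm of $\omega_n$ computed in $L^q(T^*\mathbb{X})$ is the same function as the one computed in $L^p(T^*\mathbb{X})$, so $|\omega_n|_* \to |\bar\iota(\omega)|_*$ in $L^q(\mathbb{X},\nu)$. Uniqueness of limits in $L^q(\mathbb{X},\nu)$ then yields $|\bar\iota(\omega)|_* = |\omega|_*$ $\nu$-a.e.

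In particular, if $\bar\iota(\omega) = 0$, then $|\omega|_* = 0$ $\nu$-a.e., whence $\|\omega\|_{L^p(T^*\mathbb{X})} = \| \, |\omega|_* \, \|_{L^p(\mathbb{X},\nu)} = 0$, i.e.\ $\omega = 0$. Thus $\bar\iota$ is injective, and we may identify $L^p(T^*\mathbb{X})$ with its image $\bar\iota\big(L^p(T^*\mathbb{X})\big) \subset L^q(T^*\mathbb{X})$. The estimate from the first paragraph --- equivalently, the identity $|\bar\iota(\omega)|_* = |\omega|_*$ combined with Hölder's inequality --- shows that this embedding is continuous, with operator norm at most $\nu(\mathbb{X})^{1/q - 1/p}$, which completes the proof.
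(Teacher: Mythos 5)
Your proof is correct, and its overall architecture --- establish the inclusion with a norm bound on the dense subspace $\sfrac{\mathrm{PCM}_p}{\sim}$ and then pass to completions --- matches the paper's. The differences are in how the quantitative bound is obtained and in how carefully injectivity is handled. The paper fixes $\varepsilon_{n_0}$, splits each $A_i \cap A_j$ into the sets where $|D(f_i-f_j)|$ is below or above $\varepsilon_{n_0}$, and estimates the two pieces separately to show that a Cauchy sequence in $\sfrac{\mathrm{PCM}_p}{\sim}$ is Cauchy in $\sfrac{\mathrm{PCM}_q}{\sim}$, ending with a constant of the form $(1+\nu(\mathbb{X}))^{1/q}$; you instead apply H\"older's inequality with exponents $p/q$ and $p/(p-q)$ directly, which is shorter and yields the cleaner operator-norm bound $\nu(\mathbb{X})^{1/q-1/p}$ (the paper itself invokes H\"older to get continuity of the inclusion $\mathrm{PCM}_p \subset \mathrm{PCM}_q$ in the preceding lemma, so your route is very much in its spirit). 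More substantively, the paper asserts that injectivity of the extended map ``follows from'' the Cauchy-sequence claim, which really only gives well-definedness of the extension; your argument via the $1$-Lipschitz continuity of the pointwise norm, showing $|\bar\iota(\omega)|_* = |\omega|_*$ $\nu$-a.e.\ and hence that $\bar\iota(\omega)=0$ forces $\omega=0$, supplies that missing step explicitly (the subadditivity $\bigl||v|_*-|w|_*\bigr|\leq|v-w|_*$ you use is indeed available in Gigli's framework, though it is not among the axioms listed in Section 2, so the citation is needed). Both arguments rely, as you correctly flag, on the standing RCD assumption guaranteeing that the minimal weak upper gradient --- and hence the pointwise norm of an element of the pre-cotangent module --- does not depend on the exponent.
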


\begin{proof}
We already have the inclusion at the level of pre-cotangent modules and after identifying some elements by the equivalence relation $\sim$. We need to show that every element from $L^p(T^* \mathbb{X})$, which is in general a limit of elements of $\sfrac{\mathrm{PCM}_p}{\sim}$, can be represented as an element of $L^q(T^* \mathbb{X})$ and this map is injective. This follows from the following claim: a Cauchy sequence in $\sfrac{\mathrm{PCM}_p}{\sim}$ is a Cauchy sequence in $\sfrac{\mathrm{PCM}_q}{\sim}$ with the respective norms.

We now proceed to prove the claim. Suppose that $\{ (f_i, A_i) \}_{i \in \N}$ is a Cauchy sequence in $\sfrac{\mathrm{PCM}_p}{\sim}$, there exists $\varepsilon_{n_0} > 0$ such that i.e. for $i,j \geq n_0$
\begin{equation}
\| (f_i,A_i) - (g_j,A_j) \|_{L^p(T^* \mathbb{X})} = \| (f_i - f_j, A_i \cap A_j) \|_{L^p(T^* \mathbb{X})} < \varepsilon_{n_0},
\end{equation}
or equivalently
\begin{equation}
\sum_{i,j \in \N} \int_{A_i \cap A_j} |D(f_i - f_j)|^p \, d\nu < \varepsilon_{n_0}^p.
\end{equation}
Now, denote $B_{ij} = \{ |D(f_i - f_j)| < \varepsilon_{n_0} \}$ and $C_{ij} = \{ |D(f_i - f_j)| \geq \varepsilon_{n_0} \}$. From the previous equation, we conclude that
\begin{align}
\varepsilon_{n_0}^p > \sum_{i,j \in \N} \int_{A_i \cap A_j} |D(f_i - f_j)|^p \, d\nu &\geq \sum_{i,j \in \N} \int_{A_i \cap A_j \cap C_{ij}} |D(f_i - f_j)|^p \, d\nu \\
&\geq
\sum_{i,j \in \N} \int_{A_i \cap A_j \cap C_{ij}} \varepsilon_{n_0}^{p-q} |D(f_i - f_j)|^q \, d\nu.
\end{align}
Therefore,
\begin{align}
\sum_{i,j \in \N} &\int_{A_i \cap A_j} |D(f_i - f_j)|^q \, d\nu \\
&= \sum_{i,j \in \N} \int_{A_i \cap A_j \cap B_{ij}} |D(f_i - f_j)|^q \, d\nu + \sum_{i,j \in \N} \int_{A_i \cap A_j \cap C_{ij}} |D(f_i - f_j)|^q \, d\nu \\
&\leq \varepsilon_{n_0}^q \cdot \nu(\mathbb{X}) + \varepsilon_{n_0}^q,
\end{align}
so
\begin{equation}\label{eq:cauchysequence}
\| (f_i,A_i) - (g_j,A_j) \|_{L^q(T^* \mathbb{X})} = \| (f_i - f_j, A_i \cap A_j) \|_{L^q(T^* \mathbb{X})} < C(\mathbb{X}) \varepsilon_{n_0}
\end{equation}
and $\{ (f_i, A_i) \}_{i \in \N}$ is a Cauchy sequence in $L^q(T^* \mathbb{X})$, so $L^p(T^* \mathbb{X}) \subset L^q(T^* \mathbb{X})$. Moreover, since on the right-hand side of \eqref{eq:cauchysequence} we have $\varepsilon_{n_0}$ multiplied by a universal constant, this embedding is continuous, which concludes the proof.
\end{proof}

\begin{corollary}
We have $L^{q'}(T\mathbb{X}) \subset L^{p'}(T \mathbb{X})$.
\end{corollary}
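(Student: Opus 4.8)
The plan is to deduce the inclusion from the continuous embedding $\iota\colon L^p(T^*\mathbb{X})\hookrightarrow L^q(T^*\mathbb{X})$ of the previous Proposition by duality. Recall that, by definition, $L^{q'}(T\mathbb{X})=\mathrm{HOM}(L^q(T^*\mathbb{X}),L^1(\mathbb{X},\nu))$ and $L^{p'}(T\mathbb{X})=\mathrm{HOM}(L^p(T^*\mathbb{X}),L^1(\mathbb{X},\nu))$; thus any $q'$-vector field $X$ is in particular a bounded module morphism on all of $L^q(T^*\mathbb{X})$, and we may set $\iota^{\sharp}X:=X\circ\iota\colon L^p(T^*\mathbb{X})\to L^1(\mathbb{X},\nu)$. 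First I would check that $\iota^{\sharp}X$ lands in $L^{p'}(T\mathbb{X})$: it is bounded and linear as a composition of such maps, with $\|\iota^{\sharp}X\|\le\|\iota\|\,\|X\|$, and it is a module morphism because $\iota$ intertwines the two pointwise multiplications — on the level of the pre-cotangent modules the $L^\infty(\mathbb{X},\nu)$-action is given by the same formula $f\cdot\{(g_i,A_i)\}_{i\in\N}=\{(fg_i,A_i)\}_{i\in\N}$ for both exponents, and this identity passes to the completions. Hence $\iota^{\sharp}$ is a well-defined bounded module morphism $L^{q'}(T\mathbb{X})\to L^{p'}(T\mathbb{X})$.

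It remains to show that $\iota^{\sharp}$ is injective, which is the only nonformal point. Suppose $X\in L^{q'}(T\mathbb{X})$ satisfies $X\circ\iota=0$; then $X$ annihilates $\iota(L^p(T^*\mathbb{X}))$, in particular $X(df)=0$ for every $f\in D^{1,p}(\mathbb{X})$. Since $X$ is a module morphism it annihilates $\mathrm{Span}\{df:f\in D^{1,p}(\mathbb{X})\}$ (by linearity, the module property, and locality), and by continuity it annihilates the closure; so it suffices to know that $L^q(T^*\mathbb{X})$ is generated by the differentials of functions in $D^{1,p}(\mathbb{X})$. Recalling that $L^q(T^*\mathbb{X})$ is generated by $\{df:f\in D^{1,q}(\mathbb{X})\}$, I would reduce to approximating in the norm of $L^q(T^*\mathbb{X})$ the differential of an arbitrary $f\in D^{1,q}(\mathbb{X})$ by differentials of functions in $D^{1,p}(\mathbb{X})$. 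Here the standing assumptions enter exactly: under the RCD hypothesis the minimal weak upper gradients — hence the differentials — do not depend on the exponent, so the two notions of differential are genuinely compatible along $\iota$, and the finiteness $\nu(\mathbb{X})<\infty$ together with the density of (suitably truncated and mollified) Lipschitz functions in $W^{1,q}(\mathbb{X},d,\nu)$ provides a sufficiently rich supply of functions lying simultaneously in $D^{1,q}(\mathbb{X})$ and $D^{1,p}(\mathbb{X})$ whose differentials approximate $df$.

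The main obstacle is precisely this last generation statement: everything up to it is soft functional analysis (restriction of module morphisms), whereas showing that differentials of $D^{1,p}$-functions still generate the $q$-cotangent module uses the exponent-independence of minimal upper gradients and a density argument specific to RCD spaces. Once that is in hand, $X$ vanishes on a generating set, hence on all of $L^q(T^*\mathbb{X})$, so $X=0$ and $\iota^{\sharp}$ is injective, which yields the claimed inclusion $L^{q'}(T\mathbb{X})\subset L^{p'}(T\mathbb{X})$ (and, by the bound $\|\iota^{\sharp}X\|\le\|\iota\|\,\|X\|$, a continuous one).
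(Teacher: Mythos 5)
Your argument is exactly the one the paper intends: the corollary is stated without proof as the dual counterpart of the continuous embedding $L^p(T^*\mathbb{X})\subset L^q(T^*\mathbb{X})$, obtained by restricting module morphisms in $\mathrm{HOM}(L^q(T^*\mathbb{X}),L^1(\mathbb{X},\nu))$ along that embedding. Your additional discussion of injectivity of the restriction map --- reducing it to the fact that differentials of functions in $D^{1,p}(\mathbb{X})$ generate $L^q(T^*\mathbb{X})$, via the RCD exponent-independence of minimal upper gradients, $\nu(\mathbb{X})<\infty$, and density in energy of Lipschitz functions --- is a correct and worthwhile elaboration of a point the paper leaves implicit.
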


We are now ready to identify the subdifferential of $\mathcal{F}_{(q,p)}$. Note that the structure of the subdifferential is qualitatively different when $q > 1$ and when $q = 1$; however, even if $q = 1$, it is much more regular than the subdifferential of the total variation \cite{GM2021}, and we do not require the doubling or Poincar\'e assumptions on the metric measure space.

\begin{definition}
Let $q > 1$. We say that $(u,v) \in \mathcal{A}_{(q,p)}$ if and only if $u,v \in L^2(\mathbb{X},\nu)$, $u \in W^{1,p}(\mathbb{X},d,\nu)$ and there exist vector fields $X_1 \in \mathcal{D}^{p',2}(\mathbb{X})$ and $X_2 \in \mathcal{D}^{q',2}(\mathbb{X})$ such that the following conditions hold:
\begin{equation}
-\mathrm{div}(X_1 + X_2) = v \quad \mbox{in } \mathbb{X};
\end{equation}
\begin{equation}
du(X_1) = |du|_*^p = |X_1|^{p'} \quad \nu-\mbox{a.e. in } \mathbb{X};
\end{equation}
\begin{equation}
du(X_2) = |du|_*^q = |X_2|^{q'} \quad \nu-\mbox{a.e. in } \mathbb{X}.
\end{equation}
\end{definition}

We will characterise the subdifferential of $\mathcal{F}_{(q,p)}$ using the operator $\mathcal{A}_{(q,p)}$ by applying the general result from the previous Section. To this end, we set $E_{(q,p)}: L^p(T^* \mathbb{X}) \rightarrow \mathbb{R}$ by the formula
\begin{equation}
E_{(q,p)}(v) = \int_{\mathbb{X}} \bigg( \frac1p |v|_*^p + \frac1q |v|_*^q \bigg) \, d\nu.
\end{equation}
We see that $E_{(q,p)}$ is convex, continuous, nonnegative and coercive, and we have
\begin{equation}
\mathcal{F}_{(q,p)}(u) = \twopartdef{E_{(q,p)}(du)}{\mbox{if } u \in W^{1,p}(\mathbb{X},d,\nu);}{+ \infty}{\mbox{otherwise.}}.
\end{equation}
Therefore, in order to apply Theorem \ref{thm:generalresult}, we only need to characterise the subdifferential of $E_{(q,p)}$ in the duality between $L^p(T^*\mathbb{X})$ and $L^{p'}(T\mathbb{X})$.

\begin{lemma}\label{lem:qpsubdifferential}
For $v \in L^p(T^* \mathbb{X})$, we have
\begin{align}
\partial E_{(q,p)}(v) = \bigg\{ X_1 &+ X_2: \,\, X_1 \in L^{p'}(T\mathbb{X}), \, X_2 \in L^{q'}(T\mathbb{X}),  \\
&v(X_1) = |X_1|^{p'} = |v|_*^p \,\,\, \nu-\mbox{\rm a.e.}, \quad v(X_2) = |X_2|^{q'} = |v|_*^q \,\,\, \nu-\mbox{\rm a.e.} \bigg\}. \nonumber
\end{align}
\end{lemma}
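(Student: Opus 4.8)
The plan is to characterise $\partial E_{(q,p)}(v)$ via the Legendre--Fenchel transform and the standard equivalence $X \in \partial E_{(q,p)}(v) \iff E_{(q,p)}(v) + E_{(q,p)}^*(X) = \int_{\mathbb{X}} v(X)\,d\nu$, which is available since $E_{(q,p)}$ is convex and continuous on $L^p(T^*\mathbb{X})$, with values in $\mathbb{R}$. First I would write $E_{(q,p)} = E_p + E_q$, where $E_r(v) = \frac1r \int_{\mathbb{X}} |v|_*^r\,d\nu$ (using the embedding $L^p(T^*\mathbb{X}) \subset L^q(T^*\mathbb{X})$ from the preceding Proposition so that $E_q$ makes sense on all of $L^p(T^*\mathbb{X})$), and argue that $\partial E_{(q,p)}(v) = \partial E_p(v) + \partial E_q(v)$. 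The inclusion $\supset$ is immediate from the definition of the subdifferential. For $\subset$ one invokes the Moreau--Rockafellar sum rule: since $E_p$ is finite and continuous everywhere on $L^p(T^*\mathbb{X})$, the sum rule $\partial(E_p + E_q) = \partial E_p + \partial E_q$ holds (this is the same continuity hypothesis used in the Fenchel--Rockafellar theorem cited as Theorem \ref{thm:fenchelrockafellartheorem}); one must be slightly careful that $\partial E_q$ is computed in the duality between $L^p(T^*\mathbb{X})$ and its dual, but because the minimal $q$-weak upper gradients are exponent-independent under the RCD assumption, an element of $\partial E_q(v)$ lying a priori in $L^{q'}(T\mathbb{X})$ embeds into $L^{p'}(T\mathbb{X})$ via the Corollary above, so the two notions of subdifferential are compatible.

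Next I would compute $\partial E_p(v)$ and $\partial E_q(v)$ separately, exactly as in Example \ref{ex:plaplace}. For $E_p$, the convex conjugate is $E_p^*(X) = \frac{1}{p'}\int_{\mathbb{X}} |X|^{p'}\,d\nu$ (this is a pointwise Legendre transform combined with the fact that the pointwise norm on the dual module is the dual of the pointwise norm on $L^p(T^*\mathbb{X})$, together with the identity $\||X|\|_{L^{p'}} = \|X\|_{L^{p'}(T\mathbb{X})}$). Then $X_1 \in \partial E_p(v)$ iff $\frac1p\int_{\mathbb{X}}|v|_*^p\,d\nu + \frac{1}{p'}\int_{\mathbb{X}}|X_1|^{p'}\,d\nu = \int_{\mathbb{X}} v(X_1)\,d\nu$. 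Since pointwise $v(X_1) \leq |v|_* |X_1|$ a.e. and $|v|_*|X_1| \leq \frac1p|v|_*^p + \frac{1}{p'}|X_1|^{p'}$ by Young's inequality, the integral identity forces both inequalities to be equalities $\nu$-a.e., giving $v(X_1) = |v|_*|X_1|$ and $|X_1|^{p'} = |v|_*^p$, hence $v(X_1) = |X_1|^{p'} = |v|_*^p$ $\nu$-a.e. The analogous computation with $p$ replaced by $q$ gives $v(X_2) = |X_2|^{q'} = |v|_*^q$ $\nu$-a.e. Combining with the sum rule yields precisely the stated description of $\partial E_{(q,p)}(v)$.

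The main obstacle I anticipate is the justification of the sum rule in the module setting and, relatedly, the compatibility of the $L^{q'}$- and $L^{p'}$-duality pairings. One must make sure that when we write $X = X_1 + X_2$ with $X_1 \in L^{p'}(T\mathbb{X})$ and $X_2 \in L^{q'}(T\mathbb{X})$, the sum is unambiguously an element of $L^{p'}(T\mathbb{X})$ (using $L^{q'}(T\mathbb{X}) \subset L^{p'}(T\mathbb{X})$) and that the pairing $v(X)$ appearing in the conjugacy identity agrees with $v(X_1) + v(X_2)$; this uses that the embedding of cotangent modules is compatible with the duality, which follows from the explicit formula for the pointwise norm on the dual module and the density of differentials of Sobolev functions. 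Once this bookkeeping is in place, the rest is a pointwise Young-inequality argument identical to the one already carried out in Example \ref{ex:plaplace}, so I would state the conjugate formulas, invoke the sum rule with a one-line continuity justification, and present the equality-in-Young's-inequality step for each term.
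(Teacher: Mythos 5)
Your proposal is correct and follows essentially the same route as the paper: decompose $E_{(q,p)}$ into the $p$- and $q$-homogeneous parts, invoke the sum rule for subdifferentials via continuity of both summands, and identify each subdifferential through the Fenchel equality and equality in Young's inequality exactly as in Example \ref{ex:plaplace}. Your additional care about the compatibility of the $L^{q'}$- and $L^{p'}$-duality pairings under the RCD embedding is a point the paper leaves implicit, but it does not change the argument.
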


\begin{proof}
Observe that we can decompose the functional $E_{(q,p)}$ into two parts in the following way: $E_{(q,p)} = E_1 + E_2$, where $E_1, E_2: L^p(T^*\mathbb{X}) \rightarrow \mathbb{R}$ are given by
\begin{equation}
E_1(v) = \frac1p \int_{\mathbb{X}} |v|_*^p \, d\nu; \qquad E_2(v) = \frac1q \int_{\mathbb{X}} |v|_*^q \, d\nu.
\end{equation}
Since $E_1$ and $E_2$ are continuous, we have that $\partial E_{(q,p)}$ is the algebraic sum of $\partial E_1$ and $\partial E_2$. We make a computation similar to the one in Example \ref{ex:plaplace} and see that $X_1 \in \partial E_1(v)$ if and only if $v(X_1) = |X_1|^{p'} = |v|_*^p$ $\nu$-a.e in $\mathbb{X}$; analogously, and the condition $X_2 \in \partial E_2(v)$ is equivalent to $v(X_2) = |X_2|^{q'} = |v|_*^q$ $\nu$-a.e. in $\mathbb{X}$.
\end{proof}

Now, let us recall that a multivalued operator $\mathcal{A}$ is completely accretive if and only if the following condition is satisfied (see \cite{ACMBook,BCr2}):
\begin{equation}\label{eq:conditionforaccretivity}
\int_\Omega T(u^1 - u^2) (v^1 - v^2) \, dx \geq 0    
\end{equation}
for every $(u^1,v^1), (u^2,v^2) \in \mathcal{A}$ and all functions $T \in C^{\infty}(\mathbb{R})$ such that $0 \leq T' \leq 1$, $T'$ has compact support, and $x=0$ is not contained in the support of $T$.

\begin{theorem}\label{thm:qpsubdifferential}
We have $\mathcal{A}_{(q,p)} = \partial \mathcal{F}_{(q,p)}$. Furthermore, the operator $\mathcal{A}_{(q,p)}$ is completely accretive and its domain is dense in $L^2(\mathbb{X},\nu)$.
\end{theorem}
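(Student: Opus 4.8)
The plan is to verify the three assertions of Theorem \ref{thm:qpsubdifferential} in order: the identity $\mathcal{A}_{(q,p)} = \partial \mathcal{F}_{(q,p)}$, complete accretivity, and density of the domain. For the first claim, I would apply Theorem \ref{thm:generalresult} directly. We have already observed that $\mathcal{F}_{(q,p)}(u) = E_{(q,p)}(du)$ for $u \in W^{1,p}(\mathbb{X},d,\nu)$, that $E_{(q,p)}$ is convex, continuous, nonnegative and coercive (the coercivity bound \eqref{eq:generalboundfrombelow} holds with the $p$-term alone), so the assumptions of Section \ref{sec:general} are met. Hence $\partial \mathcal{F}_{(q,p)} = \mathcal{A}$, where $\mathcal{A}$ is the operator defined there: $(u,v) \in \mathcal{A}$ iff there is $X \in \mathcal{D}^{p',2}(\mathbb{X})$ with $-\mathrm{div}(X) = v$ and $X \in \partial E_{(q,p)}(du)$. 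By Lemma \ref{lem:qpsubdifferential}, $X \in \partial E_{(q,p)}(du)$ exactly when $X = X_1 + X_2$ with $X_1 \in L^{p'}(T\mathbb{X})$, $X_2 \in L^{q'}(T\mathbb{X})$, $du(X_1) = |X_1|^{p'} = |du|_*^p$ and $du(X_2) = |X_2|^{q'} = |du|_*^q$ $\nu$-a.e. The one subtlety is matching the regularity classes: in the definition of $\mathcal{A}_{(q,p)}$ one asks for $X_1 \in \mathcal{D}^{p',2}(\mathbb{X})$ and $X_2 \in \mathcal{D}^{q',2}(\mathbb{X})$ separately, whereas $\mathcal{A}$ only gives $X_1 + X_2 \in \mathcal{D}^{p',2}(\mathbb{X})$. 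I would resolve this by noting that under the RCD assumption (and properness when $q=1$) the differentials and divergences do not depend on the exponent, so $L^{q'}(T\mathbb{X}) \subset L^{p'}(T\mathbb{X})$ (the Corollary just proved) and $\mathcal{D}^{q',2}$, $\mathcal{D}^{p',2}$ are compatible; from $X_1 + X_2 \in \mathcal{D}^{p',2}(\mathbb{X})$ one deduces $\mathrm{div}(X_1)$ and $\mathrm{div}(X_2)$ are separately well-defined in $L^2(\mathbb{X},\nu)$ because the pointwise constraints $|X_1|^{p'} = |du|_*^p$ and $|X_2|^{q'} = |du|_*^q$ force $X_1, X_2$ into the right integrability classes (using $\nu(\mathbb{X}) < \infty$). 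This gives $\mathcal{A}_{(q,p)} = \mathcal{A} = \partial \mathcal{F}_{(q,p)}$.

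For complete accretivity, I would verify the condition \eqref{eq:conditionforaccretivity}. Take $(u^1, v^1), (u^2, v^2) \in \mathcal{A}_{(q,p)}$ with associated decompositions $X^1 = X^1_1 + X^1_2$ and $X^2 = X^2_1 + X^2_2$, and a test function $T \in C^\infty(\mathbb{R})$ with $0 \le T' \le 1$, $T'$ compactly supported and $0 \notin \operatorname{supp} T$. Writing $v^j = -\mathrm{div}(X^j)$ and using the definition of the divergence together with the chain rule (Proposition \ref{prop:chainrule}), we get
\begin{align*}
\int_{\mathbb{X}} T(u^1 - u^2)(v^1 - v^2) \, d\nu &= \int_{\mathbb{X}} d\big(T(u^1 - u^2)\big)(X^1 - X^2) \, d\nu \\
&= \int_{\mathbb{X}} T'(u^1 - u^2) \, d(u^1 - u^2)(X^1 - X^2) \, d\nu.
\end{align*}
Splitting $X^1 - X^2 = (X^1_1 - X^2_1) + (X^1_2 - X^2_2)$, it then suffices to show the integrand $T'(u^1-u^2)\, d(u^1-u^2)(X^1_k - X^2_k)$ is pointwise nonnegative for $k = 1, 2$. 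This is the standard monotonicity argument: the constraints $du^j(X^j_1) = |X^j_1|^{p'} = |du^j|_*^p$ say precisely that, $\nu$-a.e., $X^j_1$ realizes the duality pairing with $du^j$ and hence behaves like the "gradient of $\frac1p|\cdot|^p$ evaluated at $du^j$"; combined with $T' \ge 0$, the elementary inequality for the $p$-Laplacian-type nonlinearity gives the sign. I would make this precise using the pointwise norm inequalities $du^1(X^2_1) \le |du^1|_* |X^2_1| = |du^1|_* |du^2|_*^{p-1}$ and symmetrically, expanding $d(u^1 - u^2)(X^1_1 - X^2_1) = du^1(X^1_1) - du^1(X^2_1) - du^2(X^1_1) + du^2(X^2_1) \ge |du^1|_*^p - |du^1|_*|du^2|_*^{p-1} - |du^2|_*|du^1|_*^{p-1} + |du^2|_*^p = (|du^1|_*^{p-1} - |du^2|_*^{p-1})(|du^1|_* - |du^2|_*) \ge 0$, and likewise for the $q$-term. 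The factor $T'(u^1-u^2) \ge 0$ is pulled out, so each of the two pieces is nonnegative and the sum is $\ge 0$.

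Finally, density of the domain is immediate: since $\mathcal{A}_{(q,p)} = \partial \mathcal{F}_{(q,p)}$ and $\mathcal{F}_{(q,p)}$ is proper (e.g. $0 \in D(\mathcal{F}_{(q,p)})$), convex and lower semicontinuous on $H = L^2(\mathbb{X},\nu)$, Proposition \ref{prop:densityofthedomain} gives $D(\partial \mathcal{F}_{(q,p)}) \subset D(\mathcal{F}_{(q,p)}) \subset \overline{D(\mathcal{F}_{(q,p)})} \subset \overline{D(\partial \mathcal{F}_{(q,p)})}$; it remains to note that $D(\mathcal{F}_{(q,p)}) = W^{1,p}(\mathbb{X},d,\nu) \cap L^2(\mathbb{X},\nu)$ is dense in $L^2(\mathbb{X},\nu)$ (bounded Lipschitz functions with bounded support, or truncations of Sobolev functions, lie in it and are dense), so $\overline{D(\partial \mathcal{F}_{(q,p)})}^{L^2(\mathbb{X},\nu)} = L^2(\mathbb{X},\nu)$. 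The main obstacle, as noted, is the bookkeeping in the first step — cleanly separating the single vector field $X \in \mathcal{D}^{p',2}(\mathbb{X})$ produced by the abstract theorem into the two components $X_1 \in \mathcal{D}^{p',2}(\mathbb{X})$, $X_2 \in \mathcal{D}^{q',2}(\mathbb{X})$ demanded in the definition of $\mathcal{A}_{(q,p)}$ — which is where the RCD exponent-independence of the differential structure and the finiteness of $\nu$ are genuinely used; the accretivity computation, while requiring care with the pointwise algebra in Gigli's module formalism, is otherwise routine.
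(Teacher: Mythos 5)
Your overall route coincides with the paper's: identify $\partial\mathcal{F}_{(q,p)}$ by combining Theorem \ref{thm:generalresult} with Lemma \ref{lem:qpsubdifferential}, verify \eqref{eq:conditionforaccretivity} by pairing $T(u^1-u^2)$ with the equation, applying the chain rule and the pointwise subdifferential identities, and read off density of the domain from the general theorem (equivalently, Proposition \ref{prop:densityofthedomain}). The only cosmetic difference in the accretivity step is that the paper closes the estimate with Young's inequality, $du^1(X_1^2)\le\frac1p|du^1|_*^p+\frac1{p'}|X_1^2|^{p'}$, while you use the equivalent factorisation $\bigl(|du^1|_*^{p-1}-|du^2|_*^{p-1}\bigr)\bigl(|du^1|_*-|du^2|_*\bigr)\ge 0$; both are correct. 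Your version is in one respect slightly more economical: you integrate by parts once against the divergence of the sum $X^j=X_1^j+X_2^j$ and only then split the pairing $d(T(u^1-u^2))(X^1-X^2)$ by linearity of the module duality, whereas the paper first writes $\mathrm{div}(X_1^j+X_2^j)=\mathrm{div}(X_1^j)+\mathrm{div}(X_2^j)$, which already invokes the separate memberships $X_1^j\in\mathcal{D}^{p',2}(\mathbb{X})$, $X_2^j\in\mathcal{D}^{q',2}(\mathbb{X})$.

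The one step of yours that does not hold up is exactly the one you flag as the main obstacle. For the inclusion $\partial\mathcal{F}_{(q,p)}\subseteq\mathcal{A}_{(q,p)}$ one must upgrade ``$X_1+X_2\in\mathcal{D}^{p',2}(\mathbb{X})$'' to ``$X_1\in\mathcal{D}^{p',2}(\mathbb{X})$ and $X_2\in\mathcal{D}^{q',2}(\mathbb{X})$ separately'', and your justification --- that the pointwise constraints $|X_1|^{p'}=|du|_*^p$ and $|X_2|^{q'}=|du|_*^q$ together with $\nu(\mathbb{X})<\infty$ force $X_1,X_2$ ``into the right integrability classes'' --- only re-derives $X_1\in L^{p'}(T\mathbb{X})$ and $X_2\in L^{q'}(T\mathbb{X})$, which Lemma \ref{lem:qpsubdifferential} already provides. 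Membership in $\mathcal{D}^{\cdot,2}(\mathbb{X})$ is not an integrability condition on the field: it asserts the existence of an $L^2$ representative of the distributional divergence, and integrability of $|X_1|$, $|X_2|$ gives no control whatsoever on $\mathrm{div}(X_1)$ or $\mathrm{div}(X_2)$ individually --- only their sum is known to have an $L^2$ divergence. To be fair, the paper's own proof passes over this point in silence (``immediately follows''), so you are not behind the paper here; but your claimed fix is a non sequitur and should be either dropped or replaced, for instance by relaxing the definition of $\mathcal{A}_{(q,p)}$ so that only $X_1+X_2\in\mathcal{D}^{p',2}(\mathbb{X})$ is required --- which, in your formulation of the accretivity computation, is all that is ever used. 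The remaining parts of your argument (the $q$-term, the density claim) match the paper and are correct.
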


\begin{proof}
Since $\mathsf{Ch}_p$ and $\mathsf{Ch}_q$ are convex and lower semicontinuous (see \cite{AGS1}), their sum $\mathcal{F}_{(q,p)}$ also is convex and lower semicontinuous. Furthermore, the functional $E_{(q,p)}$ is convex, continuous, nonnegative and coercive, so we may apply Theorem \ref{thm:generalresult} to conclude that the subdifferential $\partial \mathcal{F}_{(q,p)}$ has dense domain in $L^2(\mathbb{X},\nu)$ and that
\begin{align*}
\partial \mathcal{F}_{(q,p)} = \bigg\{ (u,v) \in L^2(\mathbb{X},\nu): \,\, &u \in W^{1,p}(\mathbb{X},d,\nu) \,\, \mbox{\rm and there exists } X \in \mathcal{D}^{p',2}(\mathbb{X}) \\
& \quad \mbox{\rm s.t.} \,\, -\mathrm{div}(X) = v \,\, \mbox{in } \mathbb{X} \quad \mbox{\rm and} \quad X \in \partial E_{(q,p)}(du) \bigg\}.
\end{align*}
The conclusion that $\mathcal{A}_{(q,p)} = \partial \mathcal{F}_{(q,p)}$ immediately follows from the characterisation of the subdifferential of $E_{(q,p)}$ given in Lemma \ref{lem:qpsubdifferential}. 

Therefore, the only claim to be proved is complete accretivity of $\mathcal{A}_{(q,p)}$. We need to check that condition \eqref{eq:conditionforaccretivity} is satisfied. For $i = 1,2$, let $(u^i, v^i) \in \mathcal{A}_{(q,p)}$, and let $X_1^i, X_2^i$ be the associated vector fields. Since $T(u^1 - u^2) \in W^{1,p}(\mathbb{X},d,\nu)$, we use the chain rule (Proposition \ref{prop:chainrule}) to get
\begin{align*}
\int_{\mathbb{X}} T &(u^1-u^2)(v^1-v^2)\, d\nu = -  \int_{\mathbb{X}} T(u^1-u^2) (\mathrm{div}(X_1^1 + X_2^1) - \mathrm{div}(X_1^2 + X_2^2)) \, d\nu \\
&= -  \int_{\mathbb{X}} T(u^1-u^2) \, \mathrm{div}(X_1^1 - X_1^2) \, d\nu -  \int_{\mathbb{X}} T(u^1-u^2) \, \mathrm{div}(X_2^1 - X_2^2) \, d\nu \\
&= \int_{\mathbb{X}} dT(u^1-u^2)(X_1^1 - X_1^2) \, d \nu + \int_{\mathbb{X}} dT(u^1-u^2)(X_2^1 - X_2^2) \, d \nu \\
&= \int_{\mathbb{X}} T'(u^1-u^2) \, d(u^1-u^2) (X_1^1 - X_1^2) \, d \nu + \int_{\mathbb{X}} T'(u^1-u^2) \, d(u^1-u^2) (X_2^1 - X_2^2) \, d \nu.
\end{align*}
We will estimate the two terms separately. For the first term, we calculate
\begin{align*}
\int_{\mathbb{X}} &T'(u^1-u^2) \, d(u^1-u^2) (X_1^1 - X_1^2) \, d \nu \\
& =\int_{\mathbb{X}} T'(u^1-u^2) \, du^1(X_1^1) \, d\nu -  \int_{\mathbb{X}} T'(u^1-u^2) \, du^1(X_1^2) \, d\nu \\
&\qquad\qquad - \int_{\mathbb{X}} T'(u^1-u^2) \, du^2(X_1^1) \, d\nu +\int_{\mathbb{X}} T'(u^1-u^2) \, du^2(X_1^2) \, d\nu \\
&= \int_{\mathbb{X}} T'(u^1-u^2)  |du^1|_*^p \, d\nu -  \int_{\mathbb{X}} T'(u^1-u^2) \, du^1(X_1^2) \, d\nu \\
&\qquad\qquad - \int_{\mathbb{X}} T'(u^1-u^2) \, du^2(X_1^1) \, d\nu + \int_{\mathbb{X}} T'(u^1-u^2) |du^2|_*^p \, d\nu
\end{align*}
\begin{align*}
&\geq \int_{\mathbb{X}} T'(u^1-u^2)  |du^1|_*^p \, d \nu - \frac1p \int_{\mathbb{X}} T'(u^1-u^2)  |du^1|_*^p \, d\nu \\
&\qquad\qquad - \frac{1}{p'} \int_{\mathbb{X}} T'(u^1-u^2) \vert X_1^2 \vert^{p'} \, d \nu - \frac1p \int_{\mathbb{X}} T'(u^1-u^2)  |du^2|_*^p \, d\nu \\
&\qquad\qquad - \frac{1}{p'} \int_{\mathbb{X}} T'(u^1-u^2) \vert X_1^1 \vert^{p'} \, d \nu + \int_{\mathbb{X}} T'(u^1-u^2) |du^2|_*^p \, d \nu \\
&= \frac{1}{p'} \int_{\mathbb{X}} T'(u^1-u^2)  |du^1|_*^p \, d \nu - \frac{1}{p'} \int_{\mathbb{X}} T'(u^1-u^2) \vert X_1^1\vert^{p'} \, d \nu \\
&\qquad\qquad + \frac{1}{p'} \int_{\mathbb{X}} T'(u^1-u^2)  |du^2|_*^p \, d \nu - \frac{1}{p'} \int_{\mathbb{X}} T'(u^1-u^2) \vert X_1^2\vert^{p'} \, d \nu = 0,
\end{align*}
since $T' \geq 0$ and $| X_1^i |^{p'} =| du^i |_*^p$ $\nu$-a.e. for $i=1,2$. An identical argument shows that
\begin{equation*}
\int_{\mathbb{X}} T'(u^1-u^2) \, d(u^1-u^2) (X_2^1 - X_2^2) \, d \nu \geq 0,
\end{equation*}
so the operator $\mathcal{A}_{(q,p)}$ is completely accretive.
\end{proof}

Observe that the above proof of complete accretivity cannot be replicated (and, in fact, is not expected to hold) in the general case studied in Section \ref{sec:general}. This is due to the fact that the condition $X \in \partial E(du)$ does not in general have a pointwise interpretation, and therefore we cannot use it inside an integral which involves a weight $T'(u^1 - u^2)$. In the above statement, the argument worked since the subdifferential of $\partial\mathcal{F}_{(q,p)}$ has a pointwise characterisation.

Now, we will show that a similar result holds in the limiting case $q = 1$.

\begin{definition}
We say that $(u,v) \in \mathcal{A}_{(1,p)}$ if and only if $u,v \in L^2(\mathbb{X},\nu)$, $u \in W^{1,p}(\mathbb{X},d,\nu)$ and there exist vector fields $X_1 \in \mathcal{D}^{p',2}(\mathbb{X})$ and $X_2 \in \mathcal{D}^{\infty,2}(\mathbb{X})$ such that the following conditions hold:
\begin{equation}
-\mathrm{div}(X_1 + X_2) = v \quad \mbox{in } \mathbb{X};
\end{equation}
\begin{equation}
du(X_1) = |du|_*^p = |X_1|^{p'} \quad \nu-\mbox{a.e. in } \mathbb{X};
\end{equation}
\begin{equation}
\| |X_2| \|_{L^\infty(\mathbb{X},\nu)} \leq 1; \qquad du(X_2) = |du|_* \quad \nu-\mbox{a.e. in } \mathbb{X}.
\end{equation}
\end{definition}

Again, we will characterise the subdifferential of $\mathcal{F}_{(1,p)}$ using the operator $\mathcal{A}_{(1,p)}$ by applying the general result from the previous Section (Theorem \ref{thm:generalresult}). To this end, we set $E_{(1,p)}: L^p(T^* \mathbb{X}) \rightarrow \mathbb{R}$ by the formula
\begin{equation}
E_{(1,p)}(v) = \int_{\mathbb{X}} \bigg( \frac1p |v|_*^p + |v|_* \bigg) \, d\nu,
\end{equation}
so that
\begin{equation}
\mathcal{F}_{(1,p)}(u) = \twopartdef{E_{(1,p)}(du)}{\mbox{if } u \in W^{1,p}(\mathbb{X},d,\nu);}{+ \infty}{\mbox{otherwise},}
\end{equation}
and characterise its subdifferential in the duality between $L^p(T^*\mathbb{X})$ and $L^{p'}(T\mathbb{X})$.

\begin{lemma}\label{lem:1psubdifferential}
For $v \in L^p(T^* \mathbb{X})$, we have
\begin{align*}
\partial E_{(1,p)}(v) = \bigg\{ &X_1 + X_2: \,\, X_1 \in L^{p'}(T\mathbb{X}), \, X_2 \in L^{\infty}(T\mathbb{X}),  \\
&v(X_1) = |X_1|^{p'} = |v|_*^p \,\,\, \nu-\mbox{\rm a.e}, \quad \| |X_2| \|_{L^\infty(\mathbb{X},\nu)} \leq 1, \quad v(X_2) = |v|_* \,\,\, \nu-\mbox{\rm a.e.} \bigg\}. \nonumber
\end{align*}
\end{lemma}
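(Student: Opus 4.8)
The plan is to mimic the computation in Example \ref{ex:plaplace} and Lemma \ref{lem:qpsubdifferential}, decomposing $E_{(1,p)} = E_1 + E_3$, where $E_1(v) = \frac1p \int_{\mathbb{X}} |v|_*^p \, d\nu$ and $E_3(v) = \int_{\mathbb{X}} |v|_* \, d\nu$. Since both $E_1$ and $E_3$ are continuous (indeed $E_3$ is Lipschitz on $L^p(T^*\mathbb{X})$ because $\nu(\mathbb{X}) < \infty$), the subdifferential of the sum is the algebraic sum of the subdifferentials, so it suffices to identify $\partial E_1(v)$ and $\partial E_3(v)$ separately in the duality between $L^p(T^*\mathbb{X})$ and $L^{p'}(T\mathbb{X})$. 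The identification of $\partial E_1$ is exactly as in Example \ref{ex:plaplace}: using the Fenchel–Young identity $E_1(v) + E_1^*(X_1) = \int_{\mathbb{X}} v(X_1)\,d\nu$ with $E_1^*(X_1) = \frac{1}{p'}\int_{\mathbb{X}} |X_1|^{p'}\,d\nu$, together with the pointwise Young inequality $v(X_1) \leq |v|_*\,|X_1| \leq \frac1p|v|_*^p + \frac1{p'}|X_1|^{p'}$, one concludes that $X_1 \in \partial E_1(v)$ if and only if $v(X_1) = |X_1|^{p'} = |v|_*^p$ $\nu$-a.e.

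The genuinely new point is the computation of $\partial E_3$. First I would compute the Legendre–Fenchel transform $E_3^*: L^{p'}(T\mathbb{X}) \rightarrow \mathbb{R} \cup \{+\infty\}$. Since $E_3(v) = \int_{\mathbb{X}} |v|_*\,d\nu$ is positively one-homogeneous, its conjugate is the indicator function of a closed convex set, and I expect $E_3^*(X_2) = 0$ if $\||X_2|\|_{L^\infty(\mathbb{X},\nu)} \leq 1$ and $+\infty$ otherwise. To prove this, note that $\langle v, X_2\rangle - E_3(v) = \int_{\mathbb{X}} (v(X_2) - |v|_*)\,d\nu \leq 0$ whenever $|X_2| \leq 1$ $\nu$-a.e. (by the pointwise bound $|v(X_2)| \leq |v|_*\,|X_2|$), with equality attained at $v = 0$; conversely, if $|X_2| > 1$ on a set of positive measure, one builds $v$ supported on that set — concretely, take $v = \1_A \cdot dg$ for a suitable Sobolev function $g$ whose differential realizes the pointwise norm of $X_2$ up to sign, or more simply observe that by definition of the pointwise norm on $L^{p'}(T\mathbb{X})$ there exist elements $v$ of $L^p(T^*\mathbb{X})$ making $v(X_2)/|v|_*$ arbitrarily close to $|X_2|$ on $A$ — and rescale to send the supremum to $+\infty$. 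Then $X_2 \in \partial E_3(v)$ is equivalent to the Fenchel–Young equality $E_3(v) + E_3^*(X_2) = \int_{\mathbb{X}} v(X_2)\,d\nu$, which forces $\||X_2|\|_{L^\infty(\mathbb{X},\nu)} \leq 1$ and $\int_{\mathbb{X}} (|v|_* - v(X_2))\,d\nu = 0$; combined with the pointwise inequality $v(X_2) \leq |v|_*$, this yields $v(X_2) = |v|_*$ $\nu$-a.e.

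Putting the two pieces together via $\partial E_{(1,p)}(v) = \partial E_1(v) + \partial E_3(v)$ gives exactly the claimed description of $\partial E_{(1,p)}(v)$, with $X_1 \in L^{p'}(T\mathbb{X})$ and $X_2 \in L^\infty(T\mathbb{X})$ (the membership $X_2 \in L^\infty(T\mathbb{X})$ being precisely the condition $\||X_2|\|_{L^\infty(\mathbb{X},\nu)} < \infty$, which is guaranteed once we know $\||X_2|\|_{L^\infty(\mathbb{X},\nu)} \leq 1$; note here we use $\nu(\mathbb{X}) < \infty$, so that $L^\infty(T\mathbb{X}) \subset L^{q'}(T\mathbb{X}) \subset L^{p'}(T\mathbb{X})$ and all the dualities are compatible). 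The one subtlety to be careful about is justifying that the subdifferential of $E_1 + E_3$ is the sum of the subdifferentials — this is the Moreau–Rockafellar theorem and applies because $E_3$ is continuous (in fact finite and continuous everywhere on $L^p(T^*\mathbb{X})$), so there is no obstruction. I expect the main obstacle to be the careful verification that $E_3^*$ is the indicator of the unit ball in the pointwise sup-norm, specifically the direction showing $E_3^*(X_2) = +\infty$ when $|X_2|>1$ on a positive-measure set, which requires producing suitable competitor cotangent fields; this is where one invokes the defining property of the pointwise norm on the dual module $L^{p'}(T\mathbb{X})$ recalled in Section \ref{subsec:diffstructure}.
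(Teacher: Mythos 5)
Your proposal is correct and follows essentially the same route as the paper: the same decomposition $E_{(1,p)}=E_1+E_2$, the sum rule for subdifferentials of continuous convex functionals, the identification of $\partial E_1$ as in Example \ref{ex:plaplace}, and the computation of the conjugate of the one-homogeneous part as the indicator of the unit ball of the pointwise norm followed by the Fenchel--Young equality. You actually supply more detail than the paper does on the direction $E_2^*(X_2)=+\infty$ when $|X_2|>1$ on a set of positive measure, which the paper simply asserts.
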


\begin{proof}
We decompose the functional $E_{(1,p)}$ in the following way: $E_{(1,p)} = E_1 + E_2$, where $E_1, E_2: L^p(T^*\mathbb{X}) \rightarrow \mathbb{R}$ are given by
\begin{equation}
E_1(v) = \frac1p \int_{\mathbb{X}} |v|_*^p \, d\nu; \qquad E_2(v) = \int_{\mathbb{X}} |v|_* \, d\nu.
\end{equation}
Since $E_1$ and $E_2$ are continuous, we have that $\partial E_{(1,p)}$ is the algebraic sum of $\partial E_1$ and $\partial E_2$. We make a computation similar to the one in Example \ref{ex:plaplace} and see that $X_1 \in \partial E_1(v)$ if and only if $v(X_1) = |X_1|^{p'} = |v|_*^p$ $\nu$-a.e in $\mathbb{X}$. For the second subdifferential, observe that the convex conjugate $E_2^*: L^\infty(T\mathbb{X}) \rightarrow \mathbb{R}$ is given by
\begin{equation}
E_2(v^*) = \twopartdef{0}{\mbox{if } \| |v^*| \|_{L^\infty(\mathbb{X},\nu)} \leq 1;}{+\infty}{\mbox{otherwise.}}
\end{equation}
Hence, since the condition $X_2 \in \partial E_2(v)$ is equivalent to
\begin{equation}
E_2(v) + E_2^*(X_2) = \int_{\mathbb{X}} v(X_2) \, d\nu,
\end{equation}
we conclude that $X_2 \in \partial E_2(v)$ if and only if $\| |X_2| \|_{L^\infty(\mathbb{X},\nu)} \leq 1$ and
\begin{equation}
\int_{\mathbb{X}} |v|_* \, d\nu = \int_{\mathbb{X}} v(X_2) \, d\nu,
\end{equation}
or equivalently $\| |X_2| \|_{L^\infty(\mathbb{X},\nu)} \leq 1$ and $v(X_2) = |v|_*$ $\nu$-a.e. in $\mathbb{X}$.
\end{proof}

\begin{theorem}\label{thm:1psubdifferential}
We have $\mathcal{A}_{(1,p)} = \partial \mathcal{F}_{(1,p)}$. Furthermore, the operator $\mathcal{A}_{(1,p)}$ is completely accretive and its domain is dense in $L^2(\mathbb{X},\nu)$.
\end{theorem}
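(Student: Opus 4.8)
The plan is to repeat, almost verbatim, the proof of Theorem~\ref{thm:qpsubdifferential}, with $E_{(1,p)}=E_1+E_2$ and Lemma~\ref{lem:1psubdifferential} playing the roles of $E_{(q,p)}$ and Lemma~\ref{lem:qpsubdifferential}. First I would record that $\mathcal{F}_{(1,p)}$ is convex and lower semicontinuous on $L^2(\mathbb{X},\nu)$, being the sum of $\mathsf{Ch}_p$ and the total variation functional (both convex and lower semicontinuous, cf. \cite{AGS1,ADiM}; on $W^{1,p}(\mathbb{X},d,\nu)$ the total variation of $u$ equals $\int_{\mathbb{X}}|Du|\,d\nu$, so the two representations of $\mathcal{F}_{(1,p)}$ agree), and that $E_{(1,p)}$ is convex, continuous, nonnegative, and satisfies~\eqref{eq:generalboundfrombelow} with $C=1/p$; continuity of the linear--growth term on $L^p(T^*\mathbb{X})$ follows from the reverse triangle inequality for the pointwise norm, Hölder's inequality, and $\nu(\mathbb{X})<\infty$. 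Theorem~\ref{thm:generalresult} then yields that $D(\partial\mathcal{F}_{(1,p)})$ is dense in $L^2(\mathbb{X},\nu)$ and that $\partial\mathcal{F}_{(1,p)}$ is the set of pairs $(u,v)$ with $u\in W^{1,p}(\mathbb{X},d,\nu)$ for which there is $X\in\mathcal{D}^{p',2}(\mathbb{X})$ with $-\mathrm{div}(X)=v$ and $X\in\partial E_{(1,p)}(du)$.

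It remains to match this with $\mathcal{A}_{(1,p)}$ through Lemma~\ref{lem:1psubdifferential}. The inclusion $\mathcal{A}_{(1,p)}\subseteq\partial\mathcal{F}_{(1,p)}$ (hence the monotonicity of $\mathcal{A}_{(1,p)}$) is immediate: if $(u,v)\in\mathcal{A}_{(1,p)}$ with vector fields $X_1\in\mathcal{D}^{p',2}(\mathbb{X})$, $X_2\in\mathcal{D}^{\infty,2}(\mathbb{X})$, then, since $\nu(\mathbb{X})<\infty$ yields $L^\infty(T\mathbb{X})\subseteq L^{p'}(T\mathbb{X})$ and the divergence does not depend on the exponent in the RCD setting, $X:=X_1+X_2\in\mathcal{D}^{p',2}(\mathbb{X})$ with $-\mathrm{div}(X)=v$ and $X\in\partial E_{(1,p)}(du)$ by Lemma~\ref{lem:1psubdifferential}. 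For the converse one should verify the range condition for $\mathcal{A}_{(1,p)}$ directly, rerunning Steps~2--4 of the proof of Theorem~\ref{thm:generalresult} but with the doubled target $V=L^p(T^*\mathbb{X})\times L^p(T^*\mathbb{X})$, the operator $Au=(du,du)$, and $\mathbf{E}(v_1,v_2)=E_1(v_1)+E_2(v_2)$: the primal minimiser $\overline{u}$ is unchanged, and the dual problem now returns a pair $(\overline{v}_1^*,\overline{v}_2^*)$ whose extremality conditions read $-\overline{v}_i^*\in\partial E_i(d\overline{u})$ and $-\mathrm{div}(\overline{v}_1^*+\overline{v}_2^*)=\overline{u}-g$; setting $X_i=-\overline{v}_i^*$ and unwinding the pointwise characterisations of $\partial E_1,\partial E_2$ from the proof of Lemma~\ref{lem:1psubdifferential} gives $(\overline{u},g-\overline{u})\in\mathcal{A}_{(1,p)}$. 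Minty's theorem (Theorem~\ref{thm:minty}) then yields $\mathcal{A}_{(1,p)}=\partial\mathcal{F}_{(1,p)}$, and the density of the domain follows from Proposition~\ref{prop:densityofthedomain}, exactly as in Theorems~\ref{thm:generalresult} and~\ref{thm:qpsubdifferential}.

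For complete accretivity I would reproduce the computation of Theorem~\ref{thm:qpsubdifferential}. For $(u^i,v^i)\in\mathcal{A}_{(1,p)}$ with vector fields $X_1^i,X_2^i$, Proposition~\ref{prop:chainrule} reduces $\int_{\mathbb{X}}T(u^1-u^2)(v^1-v^2)\,d\nu$ to the sum of $\int_{\mathbb{X}}T'(u^1-u^2)\,d(u^1-u^2)(X_1^1-X_1^2)\,d\nu$ and $\int_{\mathbb{X}}T'(u^1-u^2)\,d(u^1-u^2)(X_2^1-X_2^2)\,d\nu$. The first integral is nonnegative by the Young--inequality argument used for $\mathcal{A}_{(q,p)}$, verbatim. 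For the second, the relevant pointwise facts are $du^i(X_2^i)=|du^i|_*$ and $\| |X_2^i| \|_{L^\infty(\mathbb{X},\nu)}\le1$, which give $du^1(X_2^2)\le|du^1|_*$ and $du^2(X_2^1)\le|du^2|_*$ $\nu$-a.e.; hence
\[
d(u^1-u^2)(X_2^1-X_2^2)=du^1(X_2^1)-du^1(X_2^2)-du^2(X_2^1)+du^2(X_2^2)\ge|du^1|_*-|du^1|_*-|du^2|_*+|du^2|_*=0
\]
$\nu$-a.e., and since $T'\ge0$ the second integral is nonnegative as well. This verifies~\eqref{eq:conditionforaccretivity}, so $\mathcal{A}_{(1,p)}$ is completely accretive.

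The step I expect to be the main obstacle is the one underlying the converse inclusion: producing the two vector fields with an $L^2$-divergence \emph{each}, rather than only for their sum. In the Fenchel--Rockafellar scheme the dual variable is constrained only through $G^*\!\big(A^*(\,\cdot\,)\big)$, which sees $\overline{v}_1^*+\overline{v}_2^*$ and not the two fields separately; the point is to exploit that, in the RCD setting, $L^{p'}(T\mathbb{X})$ is strictly convex fibrewise, so that $X_1$ is \emph{the} element of $\partial E_1(d\overline{u})$ and $X_2=X-X_1$ is then forced, together with an argument that $\mathrm{div}(X_1)\in L^2(\mathbb{X},\nu)$ --- equivalently, that $\overline{u}\in D(\partial\mathsf{Ch}_p)$ --- after which $X_2=X-X_1\in\mathcal{D}^{p',2}(\mathbb{X})$ together with $|X_2|\le1$ places $X_2$ in $\mathcal{D}^{\infty,2}(\mathbb{X})$. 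Everything else is a routine transcription of the $q>1$ case.
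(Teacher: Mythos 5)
Your route is the paper's: check that $\mathcal{F}_{(1,p)}$ is convex and lower semicontinuous and that $E_{(1,p)}$ is convex, continuous, nonnegative and coercive, invoke Theorem \ref{thm:generalresult}, translate the condition $X\in\partial E_{(1,p)}(du)$ through Lemma \ref{lem:1psubdifferential}, and prove complete accretivity by the $T'$-weighted integration by parts. The accretivity part is correct and is exactly the paper's computation (your pointwise inequality $d(u^1-u^2)(X_2^1-X_2^2)\geq 0$ is a condensed form of the paper's four-term estimate), and the inclusion $\mathcal{A}_{(1,p)}\subseteq\partial\mathcal{F}_{(1,p)}$ is handled the same way. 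The density of the domain also comes out of Theorem \ref{thm:generalresult} in both versions.

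Where you depart from the paper is the converse inclusion, and there your extra machinery does not buy what you hope for. The paper does not re-run Fenchel--Rockafellar or Minty for $\mathcal{A}_{(1,p)}$; it states that $\mathcal{A}_{(1,p)}=\partial\mathcal{F}_{(1,p)}$ follows immediately from Lemma \ref{lem:1psubdifferential} applied to the single field $X\in\mathcal{D}^{p',2}(\mathbb{X})$ produced by Theorem \ref{thm:generalresult}. The obstacle you flag --- that the definition of $\mathcal{A}_{(1,p)}$ asks for $X_1\in\mathcal{D}^{p',2}(\mathbb{X})$ and $X_2\in\mathcal{D}^{\infty,2}(\mathbb{X})$ \emph{separately}, whereas Lemma \ref{lem:1psubdifferential} only splits $X$ as a sum of tangent vector fields and only the divergence of the sum is known to lie in $L^2(\mathbb{X},\nu)$ --- is a genuine subtlety, but your doubled-target Fenchel--Rockafellar does not resolve it: with $Au=(du,du)$ one has $\langle (v_1^*,v_2^*),Au\rangle=\int_{\mathbb{X}}du(v_1^*+v_2^*)\,d\nu$, so $A^*$, the term $G^*(A^*(\cdot))$, and the extremality conditions again see only $v_1^*+v_2^*$, and you recover no more than $\mathrm{div}(\overline{v}_1^*+\overline{v}_2^*)\in L^2(\mathbb{X},\nu)$; the strict-convexity fallback still hinges on the unproven claim $\mathrm{div}(X_1)\in L^2(\mathbb{X},\nu)$. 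The clean repair is to observe that the individual divergences are never actually used: in the accretivity computation one may integrate by parts against the sum $X_1^i+X_2^i$ (which is in $\mathcal{D}^{p',2}(\mathbb{X})$) and only then split the resulting $L^1$ pairings $dT(u^1-u^2)(X_j^i)$, since $dT(u^1-u^2)\in L^p(T^*\mathbb{X})$ and $X_j^i\in L^{p'}(T\mathbb{X})$. So either the membership requirements in the definition of $\mathcal{A}_{(1,p)}$ should be read as a condition on the sum only, or the missing argument for the individual divergences must be supplied; your write-up correctly isolates this point but leaves it open.
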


\begin{proof}
Since $\mathsf{Ch}_p$ and $\mathsf{Ch}_1$ are convex and lower semicontinuous (see \cite{ADiM,AGS1}), their sum $\mathcal{F}_{(1,p)}$ also is convex and lower semicontinuous. Furthermore, the functional $E_{(1,p)}$ is convex, continuous, nonnegative and coercive, so we may apply Theorem \ref{thm:generalresult} to conclude that the subdifferential $\partial \mathcal{F}_{(1,p)}$ has dense domain in $L^2(\mathbb{X},\nu)$ and that
\begin{align*}
\partial \mathcal{F}_{(1,p)} = \bigg\{ (u,v) \in L^2(\mathbb{X},\nu): \,\, &u \in W^{1,p}(\mathbb{X},d,\nu) \,\, \mbox{\rm and there exists } X \in \mathcal{D}^{p',2}(\mathbb{X}) \\
& \quad \mbox{\rm s.t.} \,\, -\mathrm{div}(X) = v \,\, \mbox{in } \mathbb{X} \quad \mbox{\rm and} \quad X \in \partial E_{(1,p)}(du) \bigg\}.
\end{align*}
The conclusion that $\mathcal{A}_{(1,p)} = \partial \mathcal{F}_{(1,p)}$ immediately follows from the characterisation of the subdifferential of $E_{(1,p)}$ given in Lemma \ref{lem:1psubdifferential}. 

Therefore, the only claim to be proved is complete accretivity of $\mathcal{A}_{(1,p)}$. Take $T: \mathbb{R} \rightarrow \R$ to be a function as in condition \eqref{eq:conditionforaccretivity}; we will show that the condition is satisfied. For $i = 1,2$, let $(u^i, v^i) \in \mathcal{A}_{(1,p)}$, and let $X_1^i, X_2^i$ be the associated vector fields. We proceed similarly to the proof of Theorem \ref{thm:qpsubdifferential} and get that
\begin{align*}
\int_{\mathbb{X}} T &(u^1-u^2)(v^1-v^2)\, d\nu \\
&= \int_{\mathbb{X}} T'(u^1-u^2) \, d(u^1-u^2) (X_1^1 - X_1^2) \, d \nu + \int_{\mathbb{X}} T'(u^1-u^2) \, d(u^1-u^2) (X_2^1 - X_2^2) \, d \nu.
\end{align*}
Working as in the proof of Theorem \ref{thm:qpsubdifferential}, we get that the first term is nonegative. We now estimate the second term:
\begin{align*}
\int_{\mathbb{X}} &T'(u^1-u^2) \, d(u^1-u^2) (X_2^1 - X_2^2) \, d \nu \\
& =\int_{\mathbb{X}} T'(u^1-u^2) \, du^1(X_2^1) \, d\nu -  \int_{\mathbb{X}} T'(u^1-u^2) \, du^1(X_2^2) \, d\nu \\
&\qquad\qquad - \int_{\mathbb{X}} T'(u^1-u^2) \, du^2(X_2^1) \, d\nu +\int_{\mathbb{X}} T'(u^1-u^2) \, du^2(X_2^2) \, d\nu \\
&= \int_{\mathbb{X}} T'(u^1-u^2)  |du^1|_* \, d\nu -  \int_{\mathbb{X}} T'(u^1-u^2) \, du^1(X_2^2) \, d\nu \\
&\qquad\qquad - \int_{\mathbb{X}} T'(u^1-u^2) \, du^2(X_2^1) \, d\nu + \int_{\mathbb{X}} T'(u^1-u^2) |du^2|_* \, d\nu \\
&\geq \int_{\mathbb{X}} T'(u^1-u^2)  |du^1|_* \, d \nu - \int_{\mathbb{X}} T'(u^1-u^2)  |du^1|_* \, d\nu \\
&\qquad\qquad - \int_{\mathbb{X}} T'(u^1-u^2)  |du^2|_* \, d\nu + \int_{\mathbb{X}} T'(u^1-u^2) |du^2|_* \, d \nu = 0,
\end{align*}
since $T' \geq 0$, $\| |X_2| \|_{L^\infty(\mathbb{X},\nu)} \leq 1$ and $du^i(X_2^i) =| du^i |_*$ $\nu$-a.e. for $i=1,2$.
\end{proof}

In light of the above results, for any $q \in [1,\infty)$ it is natural to introduce the following concept of solution to the gradient flow of the functional $\mathcal{F}_{(q,p)}$.

\begin{definition}
We define in $L^2(\mathbb{X},\nu)$ the multivalued operator $\Delta_{(q,p)}$ by
\begin{center}$(u, v) \in \Delta_{(q,p)}$ \ if and only if \ $-v \in \partial \mathcal{F}_{(q,p)}(u)$.
\end{center}
\end{definition}

In this case, the abstract Cauchy problem \eqref{eq:abstractcauchy} corresponds to the Cauchy problem for the $(q,p)$-Laplacian, i.e.,
\begin{equation}\label{eq:qplaplacian}
\left\{ \begin{array}{ll} \frac{d}{dt} u(t) \in  \Delta_{(q,p)}(u(t)), \quad t \in [0,T] \\[5pt] u(0) = u_0. \end{array}\right.
\end{equation}

\begin{definition}\label{dfn:qplaplaceflow}
{\rm Given $u_0 \in L^2(\mathbb{X},\nu)$, we say that $u$ is a {\it weak solution} of the Cauchy problem \eqref{eq:qplaplacian}, if { $u \in C([0,T];L^2(\mathbb{X},\nu)) \cap W_{\rm loc}^{1,2}(0, T; L^2(\mathbb{X},\nu))$}, $u(0, \cdot) = u_0$, and for almost all $t \in (0,T)$
\begin{equation}
u_t(t, \cdot) \in \Delta_{(q,p)} u(t, \cdot).
\end{equation}
In other words, for almost all $t \in (0,T)$ we have $u(t) \in W^{1,p}(\mathbb{X}, d, \nu)$ and:

{\flushleft ($q > 1$)} there exist vector fields $X_1(t) \in  \mathcal{D}^{p',2}(\mathbb{X})$ and $X_2(t) \in \mathcal{D}^{q',2}(\mathbb{X})$ such that for almost all $t \in (0,T)$ the following conditions hold:
$$ u_t(t, \cdot) = \mathrm{div}(X_1(t) + X_2(t)) \quad \hbox{in } \mathbb{X}; $$
$$ | X_1(t) |^{p'} = du(t)(X_1(t)) = |du(t)|_*^p \quad \nu\hbox{-a.e. in } \mathbb{X};$$
$$ | X_2(t) |^{q'} = du(t)(X(t)) = |du(t)|_*^q \quad \nu\hbox{-a.e. in } \mathbb{X};$$

{\flushleft ($q = 1$)} there exist vector fields $X_1(t) \in  \mathcal{D}^{p',2}(\mathbb{X})$ and $X_2(t) \in \mathcal{D}^{\infty,2}(\mathbb{X})$ such that for almost all $t \in (0,T)$ the following conditions hold:
$$ u_t(t, \cdot) = \mathrm{div}(X_1(t) + X_2(t)) \quad \hbox{in } \mathbb{X}; $$
$$ | X_1(t) |^{p'} = du(t)(X_1(t)) = |du(t)|_*^p \quad \nu\hbox{-a.e. in } \mathbb{X};$$
$$ \| | X_2(t) | \|_{L^\infty(\mathbb{X},\nu)} \leq 1; \quad  du(t)(X(t)) = |du(t)|_* \quad \nu\hbox{-a.e. in } \mathbb{X}.$$
}
\end{definition}

Following the results in \cite{LP,LPR}, is some particular settings such as weighted Euclidean spaces or Riemannian manifolds one may truly interpret the above definition in a pointwise sense; see \cite{GM2021} for a similar analysis for the $p$-Laplacian evolution equation.

As a consequence of the characterisation given in Theorems \ref{thm:qpsubdifferential} and \ref{thm:1psubdifferential}, by the Brezis-Komura theorem (Theorem \ref{thm:breziskomura}), for all $q \in [1,p)$ we have the following existence and uniqueness result. The comparison principle is a consequence of the complete accretivity of the operator $\mathcal{A}_{(q,p)}$.

\begin{theorem}\label{ExisUniqp}
For any $u_0 \in L^2(\mathbb{X}, \nu)$ and all $T > 0$ there exists a unique weak solution $u(t)$ of the Cauchy problem \eqref{eq:qplaplacian} with $u(0) =u_0$.  Moreover, the following comparison principle holds: for all $r \in [1,\infty]$, if $u_1, u_2$ are weak solutions for the initial data $u_{1,0}, u_{2,0} \in  L^2(\mathbb{X}, \nu) \cap  L^r(\mathbb{X}, \nu)$ respectively, then
\begin{equation}\label{CompPrincipleplaplace}
\Vert (u_1(t) - u_2(t))^+ \Vert_r \leq \Vert ( u_{1,0}- u_{2,0})^+ \Vert_r.
\end{equation}
\end{theorem}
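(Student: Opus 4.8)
The plan is to combine the structural results already established with the abstract semigroup theory quoted in Section \ref{subsec:gradientflows}. First I would observe that $\mathcal{F}_{(q,p)}: L^2(\mathbb{X},\nu) \to [0,+\infty]$ is proper (it vanishes at $u \equiv 0$), convex, and lower semicontinuous — this was already noted in the proofs of Theorems \ref{thm:qpsubdifferential} and \ref{thm:1psubdifferential}, where it follows from convexity and lower semicontinuity of $\mathsf{Ch}_p$ together with $\mathsf{Ch}_q$ (or $\mathsf{Ch}_1$). Since the operator $\Delta_{(q,p)}$ is by definition $-\partial\mathcal{F}_{(q,p)}$, the Cauchy problem \eqref{eq:qplaplacian} is precisely the abstract Cauchy problem \eqref{eq:abstractcauchy} for the functional $\mathcal{F}_{(q,p)}$. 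Applying the Brezis--Komura theorem (Theorem \ref{thm:breziskomura}) with $H = L^2(\mathbb{X},\nu)$ and noting that $\overline{D(\mathcal{F}_{(q,p)})} = L^2(\mathbb{X},\nu)$ (density of the domain was proved in Theorems \ref{thm:qpsubdifferential} and \ref{thm:1psubdifferential}), we obtain, for every $u_0 \in L^2(\mathbb{X},\nu)$ and every $T>0$, a unique strong solution in $C([0,T];L^2(\mathbb{X},\nu)) \cap W^{1,2}_{\rm loc}(0,T;L^2(\mathbb{X},\nu))$ to \eqref{eq:abstractcauchy}. The only point requiring care is that a strong solution in the sense of Section \ref{subsec:gradientflows} is the same object as a weak solution in the sense of Definition \ref{dfn:qplaplaceflow}: this is immediate once we unwind the definitions, since $-u_t(t) \in \partial\mathcal{F}_{(q,p)}(u(t))$ together with the characterisations $\mathcal{A}_{(q,p)} = \partial\mathcal{F}_{(q,p)}$ (for $q>1$) and $\mathcal{A}_{(1,p)} = \partial\mathcal{F}_{(1,p)}$ (for $q=1$) gives exactly the vector fields $X_1(t), X_2(t)$ with the stated pointwise identities. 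Uniqueness transfers verbatim from the Brezis--Komura theorem.

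For the comparison principle, the key input is the complete accretivity of $\mathcal{A}_{(q,p)} = \partial\mathcal{F}_{(q,p)}$ established in Theorems \ref{thm:qpsubdifferential} and \ref{thm:1psubdifferential}. I would invoke the general theory of completely accretive operators (as in \cite{ACMBook,BCr2}): a completely accretive operator generates, via the Crandall--Liggett exponential formula, a semigroup on $L^2(\mathbb{X},\nu)$ that is simultaneously a (possibly nonexpansive) semigroup in every $L^r$-norm, $r \in [1,\infty]$, and in particular satisfies the $T$-contraction estimates
\begin{equation*}
\Vert (u_1(t) - u_2(t))^+ \Vert_r \leq \Vert (u_{1,0} - u_{2,0})^+ \Vert_r
\end{equation*}
for all $r \in [1,\infty]$ whenever $u_{1,0}, u_{2,0} \in L^2(\mathbb{X},\nu) \cap L^r(\mathbb{X},\nu)$. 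Since the semigroup produced by Crandall--Liggett coincides with the strong (= weak) solution furnished by Brezis--Komura — both being the unique mild solution — this estimate applies directly to the weak solutions $u_1, u_2$. Concretely, one tests the accretivity inequality \eqref{eq:conditionforaccretivity} with the resolvents $(I + \lambda \mathcal{A}_{(q,p)})^{-1}$ and a suitable approximation of the function $s \mapsto s^+$ by the admissible functions $T$, then passes to the limit $\lambda \to 0$ along the Euler scheme; this is exactly the standard argument recorded in \cite{ACMBook}, so I would cite it rather than reproduce it.

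The main obstacle, such as it is, is bookkeeping rather than genuine difficulty: one must make sure that the notion of weak solution in Definition \ref{dfn:qplaplaceflow} is literally the strong solution of the abstract problem, which hinges on the pointwise characterisations of the subdifferential already proved, and that the comparison principle from the theory of completely accretive operators can be applied to the $L^2$-strong solution (not merely to the resolvents). Both are routine given the machinery assembled in the previous sections; there is no curvature or doubling assumption needed beyond what was already imposed to make the differential structure exponent-independent. I would therefore keep the proof short, essentially reducing everything to Theorems \ref{thm:breziskomura}, \ref{thm:qpsubdifferential}, \ref{thm:1psubdifferential} and the cited accretivity theory.
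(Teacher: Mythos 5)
Your proposal is correct and follows essentially the same route as the paper: the paper likewise obtains existence and uniqueness directly from the Brezis--Komura theorem via the subdifferential characterisations in Theorems \ref{thm:qpsubdifferential} and \ref{thm:1psubdifferential}, and deduces the comparison principle from the complete accretivity of $\mathcal{A}_{(q,p)}$ by citing the standard theory of completely accretive operators. Your additional remarks (identifying weak with strong solutions, the density of the domain, and the resolvent/Crandall--Liggett bookkeeping) are accurate elaborations of steps the paper leaves implicit.
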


Note that while complete accretivity of the subdifferential of the $p$-Cheeger energy was known and could be proved using other methods \cite{Kell} (see also \cite{AGS2} for the case $p = 2$), it is not immediate that the subdifferential of the sum of Cheeger energies with different exponents is completely accretive, since the condition \eqref{eq:conditionforaccretivity} is nonlinear. The method presented in this paper provides an effective way of proving complete accretivity for functionals for which we can find a pointwise representation of the subdifferential $\partial E$.

\noindent {\bf Acknowledgments.} This research was funded partially by the Austrian Science Fund (FWF), grant ESP 88. For the purpose of open access, the author has applied a CC BY public copyright licence to any Author Accepted Manuscript version arising from this submission.

\end{document}